\newcommand{\widex}{\widehat{x}}
\newcommand{\R}{\mathbb{R}}
\newcommand{\Z}{\mathbb{Z}}
\newcommand{\PV}{\mbox{\normalfont P.V.}}
\newcommand{\Haus}{\mathcal{H}}
\newcommand{\delomega}{\partial\Omega}
\def\XXint#1#2#3{{\setbox0=\hbox{$#1{#2#3}{\int}$ }
\vcenter{\hbox{$#2#3$ }}\kern-.6\wd0}}
\newlength{\dhatheight}
\numberwithin{equation}{section}
\theoremstyle{plain}
\newtheorem{definition}{Definition}[section]
\newtheorem{theorem}[definition]{Theorem}
\newtheorem{lemma}[definition]{Lemma}
\theoremstyle{definition}
\renewcommand{\leq}{\leqslant}
\renewcommand{\ge}{\geqslant}
\renewcommand{\geq}{\geqslant}
\title[A fractional Sobolev inequality on convex hypersurfaces]{A fractional Michael-Simon Sobolev inequality\\ on convex hypersurfaces}
\author{Gyula Csat\'o}
\author{Prosenjit Roy}
\address{\vspace{-\baselineskip}
\newline
\textit{Gyula Csat\'o \textsuperscript{1,2}}
\newline
\textsuperscript{1} Universitat de Barcelona,
Facultat de Matem\`atiques i Inform\`atica,
Gran Via de les Corts Catalanes 585,
08007 Barcelona, Spain
\newline
\textsuperscript{2} 
Centre de Recerca Ma\-te\-m\`{a}\-ti\-ca,
Campus de Bellaterra, Edifici C,
08193 Bellaterra, Barcelona, Spain
\newline
\textit{E-mail address}: \textit{\tt gyula.csato@ub.edu}
}
\address{\vspace{-\baselineskip}
\newline
\textit{Prosenjit Roy \textsuperscript{3}}
\newline
\textsuperscript{3} Department of Mathematics and Statistics, IIT-Kanpur, India
\newline
\textit{E-mail address}: \textit{\tt prosenjit@iitk.ac.in}
}
\thanks{The first author is supported by the Spanish grants PID2021-123903NB-I00, PID2021-125021NA-I00, and RED2018-102650-T funded by MCIN/AEI/10.13039/501100011033 and by ERDF ``A way of making Europe'', and by the Catalan grant 
2021-SGR-00087. 
This work is supported by the Spanish State Research Agency, through the Severo Ochoa and Mar\'ia de Maeztu Program for Centers and Units of
Excellence in R\&D (CEX2020-001084-M).
Major part of this work was carried out when the second author was visiting the University of Barcelona (UB). The second author would like to thank UB for their hospitality.  Research work of second author is supported by the Core Research Grant (CRG/2022/007867) of SERB}
\keywords{fractional Sobolev inequalities on manifolds, nonlocal mean curvature, convexity,  fractional mean curvature flow, maximal time of existence.}
\subjclass[2010]{26D10, 46E35, 52A20, 53A07.}
\begin{document}

\title[ Hardy Inequality ]{A fractional Hardy-Sobolev inequality of Michael-Simon type on convex hypersurfaces }

\subjclass[2020]{26D10, 46E35, 52A20, 53A07.}
\keywords{fractional Hardy Inequality, fractional Sobolev inequality, interpolation, hypersurfaces, nonlocal mean curvature, convexity}

\maketitle
\begin{abstract}
In this paper we prove a fractional version of a Caffarelli-Kohn-Nirenberg type interpolation inequality on hypersurfaces $M\subset\R^{n+1}$ which are boundaries of convex sets. The inequality carries a universal constant independent of $M$ and involves the fractional mean curvature of $M.$ In particular, it interpolates between the fractional Micheal-Simon Sobolev inequality recently obtained by Cabr\'e, Cozzi, and the first author, and a new fractional Hardy inequality on $M$. Our method, when restricted to the plane case $M=\R^n$, gives a new simple proof of the fractional Hardy inequality.  To obtain the fractional Hardy inequality on a hypersurface, we establish an inequality which bounds a weighted perimeter of $M$ by  the standard perimeter of $M$ (modulo a universal constant), and which is valid for all convex hypersurfaces $M$.  
\end{abstract}

\section{Introduction}
The classical Michael-Simon and Allard inequality, proven for the first time independently by \cite{Allard} and \cite{Michael-Simon}, extends the classical Sobolev inequality to submanifolds of $\mathbb{R}^{n+1}$ with an universal constant independent of the manifold. In the context of  ~$C^2$ hypersurfaces  $M\subset\R^{n+1}$ it states that there 
exists a constant~$C$ depending only on~$n$ and~$p$, such that
\begin{equation}
 \label{eq:intro:classical MS}
  \|u\|_{L^{p^{\ast}}(M)}\leq C \, \Big( \|\nabla_{\! M} u\|_{L^p(M)}+\|Hu\|_{L^p(M)} \Big) \quad
  \text{for all }u\in C_c^1(M),
\end{equation}
where $1\leq p<n$, $p^{\ast} := {np}/({n-p}),$  $\nabla_{\! M}$ is the tangential gradient on~$M$, and~$H$ is the mean curvature of~$M.$ 

A few years ago a new proof has been found by Brendle \cite{Brendle}, who established the best constant in \eqref{eq:intro:classical MS} in the case ~$p=1$. This solved the long standing open problem on the isoperimetric problem on minimal surfaces (i.e. ~$H=0$). 
We recommend Cabr\'e and Miraglio  \cite{Cabre-Miraglio2022} for a proof of \eqref{eq:intro:classical MS} which combines the original ideas of \cite{Allard} and \cite{Michael-Simon} in an optimal way to give the shortest and easiest proof  currently available in the literature.

We emphasize that the constant~$C$ does not depend on~$M$ and therefore all the information about the geometry of~$M$ is encoded in its mean curvature term ~$H$ that appears on the right-hand side of~\eqref{eq:intro:classical MS}. Henceforth, by universal constant we mean such a constant, which depends only on parameters, but not on the manifold. The parameters in this paper will be some powers $p$ in ~$L^p$ norms and powers of the weight ~$|x|$, i.e., the Euclidean norm of $x\in\R^{n+1}$.

The Hardy inequality also has extensions to submanifolds with a universal constant. A good survey of these extensions, with several generalizations and new proofs can be found in Cabr\'e and Miraglio \cite{Cabre-Miraglio2022}. Among these, of particular interest to the present paper is \cite[Corollary 1.5 and 4.1]{Cabre-Miraglio2022} which shows the following inequality for hypersurfaces $M\subset\R^{n+1}$ with $n\geq 3$:
\begin{equation}
 \label{eq:intro:Cabre-Miraglio}
   \||x|^{\gamma}u\|_{L^{\tau}(M)} \leq C 
   \Big( \|\nabla_{\! M} u\|_{L^p(M)}+\|Hu\|_{L^p(M)} \Big) \quad
  \text{for all }u\in C_c^1(M),
\end{equation}
where $1\leq p<n,$ $\gamma\in [-1,0]$, $\tau\in [p,p^*]$ and 
$$
  \frac{1}{\tau}+\frac{\gamma}{n}=\frac{1}{p^*}\quad\text{ and }\quad
  p^*=\frac{np}{n-p}.
$$
Note that \eqref{eq:intro:Cabre-Miraglio} gives an interpolation between the Sobolev inequality \eqref{eq:intro:classical MS} (taking ~$\tau=p^*$ and ~$\gamma=0$) and Hardy inequality (taking $\tau=p$ and ~$\gamma=-1$) on manifolds.

In the present paper we deal with the nonlocal analogues of the interpolation Hardy-Sobolev inequality \eqref{eq:intro:Cabre-Miraglio}. Actually, we will obtain something more general, namely a Caffarelli-Kohn-Nirenberg type interpolation inequality with universal constant for fractional Sobolev spaces on hypersurfaces of Euclidean space which are the boundary of a convex set. For this purpose we shall define the following  seminorm on a hypersurface ~$M\subset\R^{n+1}$: 
\begin{equation}\label{eq:intro:seminorm}
  [u]_{W^{s,p}(M)}:=\left(\int_M\int_M \frac{|u(x)-u(y)|^p}{|x-y|^{n+sp}}dxdy\right)^{\frac{1}{p}}.
\end{equation}
Here ~$|x-y|$ is the Euclidean distance in the ambient space ~$\R^{n+1}.$ Let us denote by $W^{s,p}(M)$ the space of those functions $u\in L^p(M)$ such that  $[u]_{W^{s,p}(M)}<\infty.$

This seminorm \eqref{eq:intro:seminorm} was first introduced in Cabr\'e and Cozzi \cite{Cabre-Cozzi} where it was used to obtain a gradient estimate for nonlocal minimal graphs. It was precisely in \cite[Proposition 5.2]{Cabre-Cozzi} that for a first time a universal fractional Michael-Simon Sobolev inequality was established, albeit for surfaces satisfying a certain density estimate (which includes local minimal surfaces or nonlocal minimizing minimal surfaces). They proved on such hypersurfaces the inequality
\begin{equation}
 \label{eq:intro:Cabre Cozzi frac Sob ineq}
  \|u\|_{L^{p^{\ast}_s}(M)}\leq C \, [u]_{W^{s,p}(M)}\quad\text{for all } u\in W^{s, p}(M),
\end{equation}
where ~$p_s^*=np/(n-sp)$ is the fractional Sobolev exponent and ~$C$ is a universal constant. The inequality \eqref{eq:intro:Cabre Cozzi frac Sob ineq} holds true on nonlocal minimal surfaces as well (surfaces with vanishing fractional mean curvature), thanks to the recent result of Thompson \cite{Thompson}, who established the required density estimate for such surfaces.

We point out some other reasons why \eqref{eq:intro:seminorm} is a natural definition of the seminorm on a hypersurface of $\R^{n+1}$. The seminorm ~$[\cdot]_{W^{(1+s)/2,2}(\partial E)}$ appears in the second variation formula for the nonlocal  $s$-perimeter of a set $E\subset \R^{n+1}$, see \cite[Appendix B]{LawsonCones} or \cite[Section 6]{FFMMM}, in exact analogy of how the norm ~$\|\nabla_{\partial E} \cdot\|_{L^2(\partial E)}^2$ appears in the second variation formula of the classical perimeter (see also \cite[Section 6]{FFMMM}). Finally, in  \cite{Cabre-Cozzi-Csato}, this definition of the seminorm was precisely the right one to obtain an estimate for the extinction time of the fractional mean curvature flow for convex hypersurfaces. For a further justification of this definition we refer to  \cite[Appendix A]{CabreCsatoMas}, where the corresponding half-Laplacian has been defined on the unit circle in $\R^2$ ---meaning that $[u]_{W^{1/2,2}(M)}^2=\int_M u (-\Delta)^{1/2}u$ .

Our definition \eqref{eq:intro:seminorm} of the seminorm on a manifold does not agree with the ``canonical" definition  in Caselli, Florit, and Serra \cite{caselli2024fractional} where a different definition is given for $p=2$ and for general closed Riemannian manifold. This is due to the fact that $|x-y|$ in our definition of $[\cdot]_{W^{s,p}}$ is the Euclidean distance in $\R^{n+1}$, which for general manifolds is not comparable with the geodesic distance on the manifold. In particular, our corresponding fractional Laplacian on the hypersurface, which is determined by partial integration using our definition \eqref{eq:intro:seminorm}, does not satisfy $(-\Delta)^s\circ(-\Delta)^t=(-\Delta)^{s+t},$ in contrast to the canonical definition given in \cite{caselli2024fractional}. This property cannot be satisfied even if we modify our definition by some multiplicative constant.\footnote{This can be easily seen by taking the hypersurface as the boundary of $(-\infty,\infty)\times(0,1)\in\R^2$, whose boundary consists of two parallel lines $M_1$ and $M_2$. Taking $u$ to be equal to two different constants on each $M_i$ gives that the Laplace-Beltrami operator is $-\Delta_Mu=0$, but $(-\Delta_M)^{1/2}\circ (-\Delta_M)^{1/2} u\neq 0$ provided we take the definition of the fractional Laplacian induced by \eqref{eq:intro:seminorm}. The same example works if $M$ is the union of two concentric circles.}
However, observe that all the motivations mentioned in the previous paragraph come from problems posed in the ambient space $\R^{n+1}$, using that the manifold is embedded in $\R^{n+1}$ and has codimension $1$. This makes our setting different from the application which motivates \cite{caselli2024fractional}: their main application is  Caselli, Florit, and Serra \cite{SerraYausconj} which deals with the existence of nonlocal minimal surfaces on closed Riemannian manifolds.

Before stating our main theorem, we recall the definition of the nonlocal (or fractional) ~$\alpha$-mean curvature ~$H_{\alpha}(x)$  of ~$\Omega\subset\R^{n+1}$ at the point $x \in \partial \Omega$. It is given by 
\begin{equation} \label{Halphadef}
 \begin{split}
H_{\alpha}(x) = H_{\alpha}[\Omega](x) :=& \frac{\alpha}{2} \, \PV \int_{\R^{n+1}}\frac{\chi_{\R^{n + 1} \setminus \Omega}(y)-\chi_{\Omega}(y)}{|y-x|^{n+1+\alpha}} \, dy
\\
=&
   \PV \int_{\delomega} \frac{(y - x) \cdot \nu(y)}{|y - x|^{n + 1 + \alpha}}\,dy,
\end{split}
\end{equation}
where ~$\nu$ denotes the exterior unit normal vector to~$\partial\Omega$.

In order to state our theorem, note first that if~$\Omega\subset\R^{n+1}$ is an open convex set, then~$\partial\Omega$ is a Lipschitz hypersurface and thus, by Rademacher's theorem, differentiable at almost every point~$x\in\partial\Omega.$ On the other hand, by either one of the expressions in \eqref{Halphadef}, we see that~$H_{\alpha}(x)$ is a well-defined quantity in~$[0,+\infty],$ since~$\Omega$ is convex. Furthermore, by Aleksandrov's theorem,~$\partial\Omega$ is (pointwise) twice differentiable at almost every~$x\in\partial\Omega.$ At these points, the nonlocal mean curvature~$H_{\alpha}(x)$ is finite.

In Cabr\'e, Cozzi, and Csat\'o \cite{Cabre-Cozzi-Csato} a fractional analogue of the classical Michael-Simon Sobolev inequality \eqref{eq:intro:classical MS} was proven for convex hypersurfaces.
Here we prove an extension of that result to a Caffarelli-Kohn-Nirenberg type interpolation inequality. Our main theorem is the following, which gives exactly \cite[Theorem 1.3]{Cabre-Cozzi-Csato} for the choice ~$a=1$ and ~$\gamma=0.$ Both results are valid on manifolds which are boundaries of convex sets and  it is an open problem whether it is also true on more general manifolds. 

\begin{theorem}
    \label{th: intro: Csato-Prosenjit}
    Let~$n \ge 1$ be an integer, ~$a\in (0,1]$, $\alpha, 
 s, \in(0,1)$,   ~$\gamma\leq 0$, ~$p,q\geq 1$,  and $\tau >0$ be such that ~$sp <n$,
 \begin{equation}
     \label{eq:intro: relation between the parameters s,p, tau, gamma}
 \frac{1}{\tau} + \frac{\gamma}{n} = \frac{a}{p_s^{*}} + \frac{1-a}{q}, \hspace{3mm} \textrm{where}, \   p_s^{*} = \frac{np}{n-sp},
 \end{equation}
and
\begin{equation}\label{eq:gy:curious condition}
  \gamma\geq -as \quad\text{ if }\quad a=1\text{ or }q=p_s^*.
\end{equation} 
 
 Let~$\Omega\subset\R^{n+1}$ be an open convex set. Then, there exists a constant~$C$ depending only on $a$, $n$, $\alpha$, $s$, $\gamma$, $p$, $q$, and $\tau$ such that
\begin{equation}
     \label{eq: intro: Our Main result}
     \||x|^\gamma u\|_{L^{\tau}(\delomega)}\leq C\left(\frac{1}{2}\int_{\delomega}\int_{\delomega} \frac{|u(x)-u(y)|^p}{|x-y|^{n+ s p}} \, dx dy +\int_{\delomega}H_{\alpha}(x)^{\frac{s p}{\alpha}} |u(x)|^p \, dx  \right)^{\frac{a}{p}} 
      \|u\|_{L^{q}(\delomega)}^{1-a},
 \end{equation}
 for every ~$u\in W^{s,p}(\partial\Omega).$
\end{theorem}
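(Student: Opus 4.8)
The plan is to reduce the inequality to two endpoint cases and then interpolate. Write $Q(u)$ for the quantity inside the large parentheses on the right-hand side of~\eqref{eq: intro: Our Main result}, so that the assertion reads $\|\,|x|^{\gamma}u\|_{L^{\tau}(\delomega)}\le C\,Q(u)^{a/p}\,\|u\|_{L^{q}(\delomega)}^{1-a}$. The first endpoint is the case $a=1$, $\gamma=0$: this is exactly the fractional Michael--Simon--Sobolev inequality $\|u\|_{L^{p_s^{*}}(\delomega)}\le C\,Q(u)^{1/p}$ of Cabr\'e, Cozzi, and the first author, which we take as known. The second endpoint is the case $a=1$, $\gamma=-s$, namely the \emph{fractional Hardy inequality on $\delomega$},
\[
 \int_{\delomega}\frac{|u(x)|^{p}}{|x|^{sp}}\,dx\ \le\ C\,Q(u),
\]
which is new and whose proof, sketched in the last paragraph, is the technical core.

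Granting both endpoints, the full $a=1$ family --- all $\gamma\in[-s,0]$ with $\tfrac1\tau+\tfrac\gamma n=\tfrac1{p_s^{*}}$ --- follows at once. Indeed, with $\theta:=\tfrac{s+\gamma}{s}\in[0,1]$ one has the pointwise identity $|x|^{\gamma}|u|=|u|^{\theta}\bigl(|x|^{-s}|u|\bigr)^{1-\theta}$, and H\"older's inequality with the exponents arranged so that the first factor is measured in $L^{p_s^{*}}$ and the second in $L^{p}$ gives $\|\,|x|^{\gamma}u\|_{L^{\tau}}\le\|u\|_{L^{p_s^{*}}}^{\theta}\,\|\,|x|^{-s}u\|_{L^{p}}^{1-\theta}\le C\,Q(u)^{1/p}$, a short computation showing the exponents to be precisely those of~\eqref{eq:intro: relation between the parameters s,p, tau, gamma} with $a=1$. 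For general $a\in(0,1]$ and $-as\le\gamma\le 0$ one repeats this with $\gamma_{1}:=\gamma/a\in[-s,0]$ in place of $-s$: letting $\tau_{1}$ be the exponent attached to $\gamma_{1}$ in the $a=1$ family and writing $|x|^{\gamma}|u|=\bigl(|x|^{\gamma_{1}}|u|\bigr)^{a}|u|^{1-a}$, H\"older (first factor in $L^{\tau_{1}}$, second in $L^{q}$) together with the $a=1$ inequality for the first factor yields~\eqref{eq: intro: Our Main result}, the exponent relation again reducing to~\eqref{eq:intro: relation between the parameters s,p, tau, gamma}. Observe that condition~\eqref{eq:gy:curious condition} is exactly what forces $\gamma_{1}=\gamma/a\in[-s,0]$ in the two cases ($a=1$ or $q=p_s^{*}$) in which it is imposed.

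It remains to handle $\gamma<-as$, which~\eqref{eq:gy:curious condition} allows only when $a<1$ and $q\neq p_s^{*}$ (and which forces $\tau$ to be small); no single H\"older split does this, since the most singular radial weight provided by the Hardy inequality is $|x|^{-s}$. Here I would decompose $u$ dyadically in the radial variable: fix a partition of unity $\{\varphi_{k}\}_{k\in\Z}$ subordinate to the shells $\{2^{k-1}<|x|<2^{k+1}\}$ with $|\nabla\varphi_{k}|\lesssim 2^{-k}$ and write $u=\sum_{k}u\varphi_{k}$. On $\supp\varphi_{k}$ both $|x|^{\gamma}$ and $|x|^{-as}$ are comparable to the constants $2^{k\gamma}$ and $2^{-kas}$; applying the endpoint case $\gamma=-as$ (now available) to $u\varphi_{k}$, then H\"older on the shell and the bound $\mathcal{H}^{n}(\delomega\cap B_{2^{k+1}})\le C_{n}2^{(k+1)n}$, one obtains $\|\,|x|^{\gamma}(u\varphi_{k})\|_{L^{\tau}}^{\tau}\le C\,Q(u\varphi_{k})^{a\tau/p}\,\|u\varphi_{k}\|_{L^{q}}^{(1-a)\tau}$, all powers of $2^{k}$ cancelling because $\gamma+|\gamma|=0$. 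Summing in $k$ --- using $\sum_{k}Q(u\varphi_{k})\lesssim Q(u)$, where the seminorm pieces are controlled by the bounded overlap of the $\varphi_{k}$ and the commutator error by the \emph{already-proven} Hardy inequality applied to the Lipschitz cutoffs, while $\sum_{k}\|u\varphi_{k}\|_{L^{q}}^{q}\le\|u\|_{L^{q}}^{q}$ --- then gives~\eqref{eq: intro: Our Main result}; the point needing care is to assemble the shell estimates without a loss proportional to the number of active scales.

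Finally, the fractional Hardy inequality on $\delomega$, which I regard as the main obstacle, I would prove directly by restricting the Gagliardo double integral to a region far from the origin. Fix $\lambda$ large and put $W_{\lambda}(x):=\int_{\delomega\cap\{|y|>\lambda|x|\}}|x-y|^{-n-sp}\,dy$ and $V_{\lambda}(y):=\int_{\delomega\cap\{|x|<|y|/\lambda\}}|x-y|^{-n-sp}\,dx$. Since on these ranges $|x-y|\approx|y|$, the elementary inequality $|u(x)-u(y)|^{p}\ge 2^{1-p}|u(x)|^{p}-|u(y)|^{p}$ gives
\[
 Q(u)\ \ge\ \tfrac12\!\int_{\delomega}\!\int_{\delomega}\chi_{\{|y|>\lambda|x|\}}\frac{|u(x)-u(y)|^{p}}{|x-y|^{n+sp}}\,dx\,dy\ \ge\ 2^{-p}\!\int_{\delomega}|u|^{p}W_{\lambda}\,dx\ -\ C\lambda^{-n}\!\int_{\delomega}|u|^{p}\,|x|^{-sp}\,dx,
\]
the last step using $V_{\lambda}(y)\lesssim|y|^{-n-sp}\mathcal{H}^{n}(\delomega\cap B_{|y|/\lambda})\lesssim\lambda^{-n}|y|^{-sp}$, by the bound $\mathcal{H}^{n}(\delomega\cap B_{R})\le C_{n}R^{n}$ --- valid for every convex $\Omega$ by monotonicity of perimeter under inclusion, and essentially the ``weighted perimeter $\lesssim$ perimeter'' estimate. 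The crux of the proof is then a purely geometric comparison, valid for every convex hypersurface, of the form $|x|^{-sp}\lesssim W_{\lambda}(x)+C\,H_{\alpha}(x)^{sp/\alpha}$ for a.e. $x\in\delomega$: it rests again on $\mathcal{H}^{n}(\delomega\cap B_{R})\le C_{n}R^{n}$ together with a pointwise lower bound for $H_{\alpha}(x)$ in terms of the width of $\Omega$ transverse to a supporting hyperplane $H$ at $x$, which itself follows from the identity $H_{\alpha}(x)=\alpha\int_{H\setminus\Omega}|y-x|^{-n-1-\alpha}\,dy$. Feeding this comparison (and $\int_{\delomega}H_{\alpha}^{sp/\alpha}|u|^{p}\le Q(u)$) into the display and absorbing the resulting $\lambda^{-n}\int_{\delomega}|u|^{p}W_{\lambda}$-term for $\lambda$ large, one obtains $\int_{\delomega}|u|^{p}W_{\lambda}\,dx\lesssim Q(u)$, and the comparison once more yields $\int_{\delomega}|x|^{-sp}|u|^{p}\lesssim Q(u)$. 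On $\R^{n}$ one has $H_{\alpha}\equiv0$, and restricting to compactly supported $u$ one has $W_{\lambda}(x)\approx|x|^{-sp}$ outright, so the scheme degenerates to the announced short proof of the classical fractional Hardy inequality; convexity is used only in $\mathcal{H}^{n}(\delomega\cap B_{R})\le C_{n}R^{n}$ and the lower bound on $H_{\alpha}$, which is why it remains open whether the inequality holds on general hypersurfaces.
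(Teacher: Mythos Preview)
Your interpolation scheme for the range $\gamma\in[-as,0]$ (equivalently $1/\tau\le a/p+(1-a)/q$, by the balance relation) is sound and is a genuine alternative to what the paper does. The paper never isolates the Hardy endpoint; instead it treats the whole range in one stroke by a level-set decomposition: bound $\int_{\partial\Omega}|x|^{\tau\gamma}|u|^{\tau}$ by $\sum_j 2^{j\tau}\int_{D_j}|x|^{\tau\gamma}$ over the dyadic level sets $D_j=\{2^j<|u|\le 2^{j+1}\}$, apply the key geometric input (the weighted-perimeter bound $\int_E|x|^{-\beta}\le C|E|^{(n-\beta)/n}$ for measurable $E\subset\partial\Omega$ and $\beta\in[0,n)$) to each $D_j$, and then invoke the lemma of Cabr\'e--Cozzi--Csat\'o that controls $\sum_j 2^{jp}a_j^{(n-sp)/n}$ by $Q(u)$. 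Your route trades this set-level argument for pointwise H\"older between Sobolev and Hardy; it is cleaner once Hardy is available, but you then have to prove Hardy separately, and your claimed comparison $|x|^{-sp}\le C\bigl(W_\lambda(x)+H_\alpha(x)^{sp/\alpha}\bigr)$ is asserted rather than established --- the density estimate you appeal to concerns balls $B_R(x)$ centred at $x$, whereas $W_\lambda$ is defined through regions $\{|y|>\lambda|x|\}$ centred at the origin, and these are not directly comparable. The paper's weighted-perimeter bound plays the analogous role but acts on level sets of $u$ rather than pointwise, which is why no such comparison is needed there.

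The real gap is in your treatment of $\gamma<-as$. You flag the difficulty yourself (``without a loss proportional to the number of active scales''), but the summation cannot close as written. After the shell estimate you must control $\sum_k Q(u\varphi_k)^{a\tau/p}\|u\varphi_k\|_{L^q}^{(1-a)\tau}$, and the generalized H\"older inequality $\sum_k X_k^{t_1}Y_k^{t_2}\le(\sum_k X_k)^{t_1}(\sum_k Y_k)^{t_2}$ requires $t_1+t_2\ge 1$; here $t_1+t_2=\tau\bigl(a/p+(1-a)/q\bigr)<1$ precisely by the defining assumption $1/\tau>a/p+(1-a)/q$ of this regime. Replacing one factor by its supremum does not help either, since $a\tau/p<1$ and $(1-a)\tau/q<1$ individually (from $1/\tau>a/p$ and $1/\tau>(1-a)/q$). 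The paper avoids the issue altogether: it perturbs $(a,\gamma,\tau)$ to two nearby admissible triples $(a_i,\gamma_i,\tau_i)$ with $\tau_i>\tau$ lying in the already-proven range, applies the first case to each, and recovers the $L^\tau$-norm separately on $\partial\Omega\cap B_1$ and on $\partial\Omega\setminus B_1$ via H\"older, the leftover weight being absorbed by the universal bounds $\int_{\partial\Omega\cap B_1}|x|^{-\beta}\,dx\le C$ for $\beta<n$ and $\int_{\partial\Omega\setminus B_1}|x|^{-\beta}\,dx\le C$ for $\beta>n$. These last two estimates (again relying on convexity) are the new technical ingredient of that section and are what stands in for your dyadic summation.
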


Note that no relation between the parameters~$s$ and~$\alpha$ is assumed within the theorem. Note that we also make no assumption on where the domain ~$\Omega$ is located with respect to the origin, which could be on its boundary ~$\partial\Omega$; the term ~$|x|^{\gamma}$ on the left hand side of the inequality is just the Euclidean distance of ~$x$ in ~$\R^{n+1}$ to the power ~$\gamma.$

We recall that \eqref{eq:intro: relation between the parameters s,p, tau, gamma} is a necessary condition and follows from dimensional balance, since the theorem must hold for ~$\lambda\Omega$ too, for all ~$\lambda>0.$ 

The nonlocal analogue of the classical result of Caffarelli-Kohn-Nirenberg \cite{CKN}, in Euclidean space, has been established  in full generality in Nguyen and Squassina \cite{Squassina2018}. Both of these references establish interpolation inequalities where the product on the right hand side of the inequality also contains weighted norms, weighted by powers of $|x|$. The conditions that \cite{Squassina2018}, respectively its analogues in the local case \cite{CKN}, impose on the parameters coincide  with our conditions, when taking the powers of these weights equal to zero. 
In view of \eqref{eq:intro: relation between the parameters s,p, tau, gamma} and $a\in (0,1]$ the condition \eqref{eq:gy:curious condition} is equivalent to
\begin{equation*}
  \gamma\geq -as \quad\text{ if }\quad \frac{1}{\tau} + \frac{\gamma}{n} = \frac{1}{p_s^{*}},
\end{equation*} 
which is a more adequate formulation if one wishes to compare with  \cite{Squassina2018} or \cite{CKN}.

For an elementary proof of the fractional Hardy inequality in ~$1$-dimension and  references on its earliest proofs we refer to \cite[Section 3]{Hardy1Dim}. The fractional Hardy inequality with optimal constant in terms of the limiting behaviour in  parameter $s$ was obtained in ~\cite{mazya}, i.e., they prove the fractional Hardy inequality such that it contains the local Hardy inequality if taking the limit $s\to 1$.  A major improvement of this last result was given by Frank and Seiringer \cite{FrankSeiringer} who determined exactly the sharp constant in the  fractional Hardy inequality.

Our idea of the proof is based on the observation that the proof of the fractional Sobolev inequality in $\R^n$ presented in \cite{HitchhikersG} (and originally based on the technique developed in Savin and Valdinoci \cite{Savin-Valdinoci1,Savin-Valdinoci2}) actually also gives the fractional Hardy inequality in ~$\R^n$ if combined with the following elementary rearrangement inequality. If ~$E^*$ denotes the Schwarz rearrangement of ~$E\subset\R^{n}$ (i.e., ~$E^*$ is a ball centered at the origin, and with the same measure as ~$E$), then
\begin{equation}
 \label{eq:intro:symm in plane}
  \int_{E}|x|^{-\beta}dx \leq \int_{E^*}|x|^{-\beta}dx= \frac{C(n)}{n-\beta} |E|^{\frac{n-\beta}{n}},
\end{equation} 
where ~$\beta\in[0,n)$ and ~$C(n)$ is a dimensional constant depending only on ~$n.$ Our proof of Theorem \ref{th: intro: Csato-Prosenjit} restricted to the plane case ~($\partial\Omega=\R^n\times\{0\}$) gives a new simple proof of the fractional Hardy inequality in the plane, and is based on precisely this combination of \cite[Section 6]{HitchhikersG} with \eqref{eq:intro:symm in plane}. 

It is clear that \eqref{eq:intro:symm in plane} cannot hold for some universal constant for all ~$E\subset M$  if $M$ can be any hypersurface in ~$\R^{n+1}$. To see this it is sufficient to take a sequence of manifolds ~$M$ which ``oscillate" more and more near the origin (in the sense that ~$M$ tends to have infinitely many leaves which all pass close to the origin, like tending to a single plane with infinite multiplicity, or to a set which is dense in a neighborhood of the origin). Nevertheless, if ~$M=\partial\Omega$ and ~$\Omega$ is convex, then this cannot happen and we can show the following result. In the next theorem the word \emph{measurable} refers to the~$n$-dimensional Hausdorff measure ~$\mathcal{H}^n$ on a hypersurface of ~$\R^{n+1}.$

\begin{theorem}
    \label{th:intro:key theorem}
     Let~$\Omega\subset\R^{n+1}$ be an open convex set and ~$\beta \in [0,n)$. Then, there exists a constant~$C$ depending only on~$n$ and $\beta$  such that for any measurable subset $E$ of $\delomega$,

     \begin{equation}
         \label{eq:intro: main estimate}
\int_{E}|x|^{-\beta}dx \leq C |E|^{\frac{n-\beta}{n}}.
     \end{equation}
\end{theorem}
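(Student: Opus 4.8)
The plan is to reduce the estimate to a one-dimensional radial statement by slicing $\partial\Omega$ with spheres centered at the origin, using convexity to control the "size" of each slice. Concretely, write the integral in polar-type coordinates on $\R^{n+1}$: for $r>0$ set
\[
  \sigma(r) := \Haus^{n-1}\big(E \cap \{|x|=r\}\big),
\]
the $(n-1)$-dimensional measure of the portion of $E$ lying on the sphere $\partial B_r$. The coarea formula applied to the radial function $x\mapsto |x|$ restricted to the Lipschitz hypersurface $\partial\Omega$ gives, up to the Jacobian factor $|\nabla_{\!M}|x||\leq 1$ (which only helps us, since it appears in the denominator), a bound of the form
\[
  \int_E |x|^{-\beta}\,dx \;\leq\; \int_0^\infty r^{-\beta}\,\sigma(r)\,dr .
\]
The crucial geometric input is then an estimate on $\sigma(r)$. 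The first key claim is that, because $\Omega$ is convex, its boundary meets each sphere $\partial B_r$ in a set of controlled $(n-1)$-measure: I expect $\sigma(r)\leq C(n)\,r^{n-1}$ uniformly, reflecting the fact that a convex hypersurface cannot wrap around the origin more than "once". This should follow from the fact that $\partial\Omega\cap \overline{B_r}$ projects injectively (or with bounded multiplicity) onto $\partial B_r$ along rays, or alternatively from a monotonicity/comparison argument for convex bodies.

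Next I would combine this pointwise bound on $\sigma$ with the trivial constraint coming from the total measure $|E|$. Set $t := \sup\{r : \sigma(r)>0\}$ if $E$ is bounded, or split the radial integral at a free parameter $R>0$:
\[
  \int_0^\infty r^{-\beta}\sigma(r)\,dr
  = \int_0^R r^{-\beta}\sigma(r)\,dr + \int_R^\infty r^{-\beta}\sigma(r)\,dr .
\]
On $(0,R)$ use $\sigma(r)\leq C(n) r^{n-1}$ to get a contribution $\lesssim R^{n-\beta}$ (here $\beta<n$ is exactly what makes the integral converge at $0$). On $(R,\infty)$ use $r^{-\beta}\leq R^{-\beta}$ and $\int_R^\infty \sigma(r)\,dr \leq \int_0^\infty \sigma(r)\,dr \lesssim |E|$ (again via coarea, now in the easy direction), giving a contribution $\lesssim R^{-\beta}|E|$. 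Optimizing over $R$ — i.e.\ choosing $R \sim |E|^{1/n}$ to balance the two terms — yields exactly $\int_E |x|^{-\beta}\,dx \leq C(n,\beta)\,|E|^{(n-\beta)/n}$, which is the assertion.

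The main obstacle will be establishing the uniform bound $\sigma(r)\leq C(n)r^{n-1}$ and, more generally, making the slicing argument rigorous at the level of Hausdorff measures on a merely Lipschitz hypersurface. One has to check that the coarea formula applies to $x\mapsto |x|$ on $\partial\Omega$ (it does, since this map is $1$-Lipschitz and $\partial\Omega$ is countably $n$-rectifiable), and that the intersections $\partial\Omega\cap\partial B_r$ are $\Haus^{n-1}$-measurable of finite measure for a.e.\ $r$. For the bound on $\sigma(r)$ itself, the cleanest route is probably the radial projection $\pi_r : \partial\Omega\cap \partial B_r \to \partial B_r$ which is the identity, combined with the observation that the nearest-point projection of $\R^{n+1}$ onto the convex body $\overline\Omega$ (or onto $\overline{B_r}\cap\overline\Omega$) is $1$-Lipschitz; pushing $\Haus^{n-1}$ forward under a $1$-Lipschitz map and comparing with $\Haus^{n-1}(\partial B_r) = \omega_n r^{n-1}$ should deliver the claim with an explicit dimensional constant. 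If a uniform bound on $\sigma(r)$ turns out to be slightly too strong, it suffices to have it in an averaged form — e.g.\ $\int_0^R \sigma(r)\,dr \leq C(n) R^n$ — which is enough for the splitting argument above and is an immediate consequence of convexity since $\partial\Omega\cap B_R$ has $\Haus^n$-measure at most $\Per(B_R) = (n+1)\omega_{n+1} R^n$ by the standard fact that the perimeter of a convex set is monotone under inclusion.
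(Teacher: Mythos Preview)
Your radial-slicing strategy is genuinely different from the paper's proof and can be made to work, but the version you wrote contains a real error. The coarea formula on $\partial\Omega$ reads
\[
  \int_E g\,|\nabla_{\!M}|x||\,d\Haus^n=\int_0^\infty\!\int_{E\cap\partial B_r}g\,d\Haus^{n-1}\,dr,
\]
and since $|\nabla_{\!M}|x||\le 1$ this yields $\int_E|x|^{-\beta}dx\ge\int_0^\infty r^{-\beta}\sigma(r)\,dr$, the \emph{reverse} of what you claim; the Jacobian does not help you. Moreover the pointwise bound $\sigma(r)\le C(n)\,r^{n-1}$ is simply false: for $\Omega=B_1$ one has $\partial\Omega\cap\partial B_1=\partial B_1$, an $n$-dimensional set with infinite $\Haus^{n-1}$-measure.

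The repair is precisely the ``averaged'' route you sketch at the end as an optional fallback --- and it is not optional. Set $F(R):=|E\cap B_R|$; then for $\beta>0$ (the case $\beta=0$ being trivial) the layer-cake formula gives the exact identity $\int_E|x|^{-\beta}dx=\beta\int_0^\infty r^{-\beta-1}F(r)\,dr$, with no Jacobian issue whatsoever. Convexity enters through perimeter monotonicity, exactly as you say: $F(R)\le |\partial\Omega\cap B_R|\le|\partial(\Omega\cap B_R)|\le |\partial B_R|=C(n)R^n$, and of course $F(R)\le|E|$. Splitting the $r$-integral at $R\sim|E|^{1/n}$ now gives \eqref{eq:intro: main estimate} directly.

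For comparison, the paper argues differently: it shows (Lemma~\ref{gy:lemma:split into Mi}) that $\partial\Omega$ is covered by at most $2(n+1)$ Lipschitz graphs $M_i=\{(z,g_i(z)):z\in W_i\}$ over coordinate hyperplanes with $|\nabla g_i|\le\sqrt{n+\varepsilon}$, bounds $\int_{E\cap M_i}|x|^{-\beta}dx\le C\int_{F_i}|z|^{-\beta}dz$ on each graph, and finishes with Schwarz rearrangement in $\R^n$. Your corrected argument is shorter and uses only the perimeter-monotonicity fact that the paper invokes anyway in the proof of Lemma~\ref{lemma:weighted integrals on M}; the paper's route has the advantage of reducing to the flat estimate \eqref{eq:intro:symm in plane} with no measure-theoretic care required.
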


Our proof of Theorem \ref{th:intro:key theorem} is quite elementary and certainly does not give the best constant ~$C$ in \eqref{eq:intro: main estimate}. We expect that the best constant is achieved by a limiting set ~$\Omega$ which is an infinitely thin circular cylinder with the origin at its center, and whose boundary is thus twice an ~$n$-dimensional plane ball centered at ~$0$. This is supported by many examples we have calculated. In particular, the best constant is certainly not ~$C(n)/(n-\beta)$ appearing in the analogous inequality \eqref{eq:intro:symm in plane} of the plane case, which is sharp. But we were not able to prove \eqref{eq:intro: main estimate} with this conjectured sharp constant. 

The proof of our main Theorem \ref{th: intro: Csato-Prosenjit} for the case $1/\tau\leq a/p+(1-a)/q$ uses, among others, the methods developed in Cabr\'e, Cozzi, and Csat\'o \cite{Cabre-Cozzi-Csato}, Theorem \ref{th:intro:key theorem}, and the new rearrangement based proof of the fractional Hardy inequality. To extend the result to the full range of parameters, i.e., to $1/\tau > a/p+(1-a)/q$ we need some variants and applications of Theorem \ref{th:intro:key theorem}; see Lemma \ref{lemma:weighted integrals on M}. That lemma provides the estimates
$$
   \int_{M\setminus B_1(0)}|x|^{-\beta}dx\leq C(n,\beta)\quad \text{ if }\beta>n\quad\text{ and }
   \quad\int_{M\cap B_1(0)}|x|^{-\beta}dx\leq C(n,\beta)\quad \text{ if }\beta<n,
$$
for some universal constant $C$,
provided $M$ is a hypersurface in $\R^{n+1}$ which is the boundary of a convex set and $B_1(0)$ is the unit ball in $\R^{n+1}.$ These estimates are needed in a certain interpolation argument ---already known in the plane case--- to obtain the full range of parameters appearing in our main theorem. Such estimates cannot hold without the additional convexity assumption, as can be easily seen by similar examples as the ones indicated prior to Theorem \ref{th:intro:key theorem}.

\bigskip

\noindent\textbf{Acknowledgements.} We are grateful to Xavier Cabr\'e who made some helpful comments on an earlier version of this manuscript.

\subsection{Notation}

Throughout the paper, the word \emph{measurable} refers to the~$n$-dimensional Hausdorff measure~$\mathcal{H}^n$ on a hypersurface~$M$ of~$\R^{n+1}$, if not stated explicitly otherwise. The measure of a set~$E \subset M$ will be denoted by~$|E|$ and the integration element simply by~$dx$, instead of $d\Haus^n(x).$ Open balls are understood as balls in the ambient space~$\R^{n+1},$ i.e.,~$B_R(x)=\{y\in \R^{n+1}:\,|y-x|<R\}$ and~$|y-x|$ is the Euclidean distance in~$\R^{n+1}.$ If~$x=0$ we write~$B_R=B_R(0),$ while~$\mathbb{S}^n = \partial B_1$ is the~$n$-dimensional unit sphere in~$\R^{n+1}.$ Moreover, we define ~$\omega_n=|\{x\in\R^n:\,|x|<1\}|,$ the measure of a flat ball in ~$\R^n$, and ~$\alpha_{n-1}$ is the ~$\mathcal{H}^{n-1}$-measure of the boundary of the flat ball in ~$\R^n$ (and thus $\alpha_{n-1}=n\omega_n$).

\section{Proof of Theorem \ref{th:intro:key theorem}}

In this section we will prove Theorem \ref{th:intro:key theorem}. The idea is to write the boundary of ~$\Omega$ as a union of  graphs over the ~$n+1$ canonical hyperplanes passing through the origin, such that the slopes of these graphs is not too big. Essential will be that for convex sets there is a universal bound on the number of such graphs. Since we do not assume that $\Omega$ is bounded, and also for the sake of settling the notation, we first summarize in an elementary lemma what can happen when constructing these graphs by projecting $\Omega$ onto a hyperplane.

\begin{lemma}
  \label{lemma:projection}
   Let  $\Omega\subset\R^{n+1}$ be an open convex set and define $P$ as the projection map $x=(x',x_{n+1})\in\R^{n+1}\mapsto x'$ restricted to $\overline{\Omega}$. If $K_{n+1}=P(\overline{\Omega})$ is the image, we denote for $x'\in K_{n+1}$
   $$
     g^{-}(x'):=\inf\{(P^{-1}(x'))_{n+1})\},\qquad g^+(x'):=\sup\{(P^{-1}(x'))_{n+1}\},
   $$
    where ~$(P^{-1}(x'))_{n+1}$ is the last coordinate of the preimage of ~$x'$.
   
   Then ~$g^-(x')$ (respectively ~$g^+(x')$) is either finite for all ~$x'\in K_{n+1}$ or ~$g^-(x')=-\infty$ (respectively ~$g^+(x')=+\infty$) for all ~$x'\in K_{n+1}.$

   Moreover,  
   $$
     \Omega=\{(x',x_{n+1}):\, x'\in K_{n+1}\text{ and }g^-(x')<x_{n+1}<g^+(x')\}
    $$
    and the function ~$g^-$ is convex (respectively ~$g^+$ is concave) in the convex set ~$K_{n+1}$.
\end{lemma}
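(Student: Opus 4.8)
\textbf{Proof plan for Lemma \ref{lemma:projection}.}

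The plan is to argue entirely from the convexity of $\Omega$ and standard facts about convex functions, treating the lower graph $g^-$ (the claim for $g^+$ follows by reflecting $x_{n+1}\mapsto -x_{n+1}$, which sends $\Omega$ to another open convex set). First I would establish the structural identity. The inclusion $\Omega\subset\{(x',x_{n+1}):x'\in K_{n+1},\ g^-(x')<x_{n+1}<g^+(x')\}$ is immediate from the definitions of $g^\pm$ as infimum and supremum of the fiber. For the reverse inclusion, fix $x'\in K_{n+1}$ with $g^-(x')<t<g^+(x')$; by definition of inf and sup there are points $(x',a),(x',b)\in\overline\Omega$ with $a<t<b$, and since $\Omega$ is open and convex and $\overline\Omega$ is its closure, the open segment between any two points of $\overline\Omega$ lies in $\Omega$ (this is the elementary fact that for convex $\Omega$, $(1-\lambda)\overline\Omega+\lambda\Omega\subset\Omega$ for $\lambda\in(0,1)$; alternatively approximate $(x',a),(x',b)$ by interior points). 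Hence $(x',t)\in\Omega$, giving the identity. The same argument shows that each fiber $\{x_{n+1}:(x',x_{n+1})\in\overline\Omega\}$ is an interval, so $g^-(x')$ and $g^+(x')$ are its endpoints and $g^-\le g^+$ on $K_{n+1}$.

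Next I would prove convexity of $g^-$ on the convex set $K_{n+1}$ (convexity of $K_{n+1}$ itself is clear, being the image of the convex set $\overline\Omega$ under the linear map $P$). Take $x',y'\in K_{n+1}$ and $\lambda\in[0,1]$. If either $g^-(x')$ or $g^-(y')$ is $-\infty$ there is nothing to prove for that pair, so assume both are finite. Then $(x',g^-(x'))$ and $(y',g^-(y'))$ lie in $\overline\Omega$ (the fiber is a closed interval, being the intersection of the closed set $\overline\Omega$ with a line, when it is bounded below), so by convexity of $\overline\Omega$ the point $\big((1-\lambda)x'+\lambda y',\,(1-\lambda)g^-(x')+\lambda g^-(y')\big)$ lies in $\overline\Omega$, whence $g^-\big((1-\lambda)x'+\lambda y'\big)\le (1-\lambda)g^-(x')+\lambda g^-(y')$. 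This is exactly convexity.

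Finally the dichotomy: $g^-$ is either finite everywhere on $K_{n+1}$ or identically $-\infty$. Suppose $g^-(y_0')=-\infty$ for some $y_0'\in K_{n+1}$ but $g^-(x_0')>-\infty$ for some other $x_0'\in K_{n+1}$. I would derive a contradiction by pushing a ray into $\Omega$: since $g^-(y_0')=-\infty$, for every $N$ the point $(y_0',-N)\in\overline\Omega$; also $(x_0',g^+(x_0'))$ — or any interior point above $x_0'$ — lies in $\overline\Omega$. Consider the segments from $(x_0',c)$ to $(y_0',-N)$ for a fixed $c$ in the fiber over $x_0'$: as $N\to\infty$, the point of this segment over a fixed $x'$ strictly between $x_0'$ and $y_0'$ has last coordinate tending to $-\infty$, forcing $g^-(x')=-\infty$ for all such $x'$; letting $x'\to x_0'$ along the segment and using that $g^-$ is convex (hence continuous on the interior of its effective domain, or at least that a convex function finite at $x_0'$ cannot be $-\infty$ arbitrarily nearby) gives $g^-(x_0')=-\infty$, a contradiction. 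Equivalently, and more cleanly, one invokes the standard fact that a convex function $g^-:K_{n+1}\to[-\infty,\infty)$ which takes the value $-\infty$ at one point of a convex set is $-\infty$ on the relative interior of that set, and then a separate short argument (again sliding segments toward a boundary point of $K_{n+1}$ and using that $\overline\Omega$ is closed) extends this to all of $K_{n+1}$; I expect this last extension to the relative boundary of $K_{n+1}$ to be the only mildly delicate point, handled by the segment construction just described. The statement for $g^+$ follows verbatim after the reflection.
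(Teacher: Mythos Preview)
Your argument is correct and follows the same route as the paper's. Two minor points are worth flagging.

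First, the assertion that ``the open segment between any two points of $\overline\Omega$ lies in $\Omega$'' is false in general (take $\Omega$ an open half-space and both endpoints on its bounding hyperplane); the fact you correctly cite, $(1-\lambda)\overline\Omega+\lambda\Omega\subset\Omega$, requires one endpoint already in $\Omega$. The paper sidesteps this entirely by declaring the structural identity ``obvious, since by convexity of $\Omega$'', so you are not losing anything relative to the paper here.

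Second, your dichotomy argument is more roundabout than the paper's. The paper observes directly that along the segments from a fixed $(x_0',c)\in\Omega$ to $(y_0',r)$ with $r\to-\infty$, the points at any fixed height $t<c$ converge to $(x_0',t)$, so $(x_0',t)\in\overline\Omega$ for every $t$ and hence $g^-(x_0')=-\infty$ immediately---no detour through intermediate $x'$ is needed. Your parenthetical justification ``a convex function finite at $x_0'$ cannot be $-\infty$ arbitrarily nearby'' is actually false for improper convex functions, but your fallback via the segment construction (which is exactly the paper's argument) does work.
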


\begin{proof}
    Assume by contradiction that there exists two points ~$x',y'\in K_{n+1}$ such that ~$g^-(x')$ is finite but ~$g^-(y')=-\infty$. Let ~$x=(x',x_{n+1})$ and ~$(y',y_{n+1})$ be in ~$\Omega.$ But then by convexity the segment ~$[(x',x_{n+1}),(y',r)]\subset\Omega$ for all ~$r\leq y_{n+1}.$  By letting ~$r\to -\infty$ we obtain that every point ~$(x',t)$ is in $\overline{\Omega}$ if ~$t<g^-(x')$, which is a contradiction to the definition and finiteness of ~$g^-(x')$, since ~$\overline{\Omega}$ is convex. 

    The second statement of the lemma is obvious, since by convexity of $\Omega$ we have that ~$(x',x_{n+1})\in \Omega$ if ~$g^-(x')<x_{n+1}<g^+(x')$. 

    Let us first prove the statement of the lemma claiming that ~$g^-$ is convex in the case that ~$g^-$ is finite and ~$g^+=+\infty$. In that case ~$\lambda (x',s)+(1-\lambda)(y',t)\in\Omega$ for every ~$s>g^-(x')$ and ~$t>g^-(y')$ and every ~$0<\lambda<1$. Thus by definition of ~$g^-$ we have that ~$g^-(\lambda x'+(1-\lambda)y')\leq \lambda s+(1-\lambda)t$. Letting ~$s$ go to ~$g^-(x')$, and ~$t$ go to ~$g^-(y')$ respectively, gives the desired inequality. The other cases are proven in a similar way. In the case that ~$g^-(x')=g^+(x')$ for some ~$(x',x_{n+1})=(x',g^{\pm}(x'))$, then one only needs to introduce the parameter ~$t$, but $s$ has to be taken to equal to $x_{n+1}$ in the previous argument.  
\end{proof}

In the next Lemma we prove that ~$\partial\Omega$ is the union (up to rotations) of not more than ~$2(n+1)$ graphs whose slopes are uniformly bounded by little more than ~$\sqrt{n}.$ For the next lemma to make sense, recall that a Lipschitz function is differentiable almost everywhere.

\begin{lemma}
 \label{gy:lemma:split into Mi}
  Let ~$\epsilon>0$ and ~$\Omega\subset\R^{n+1}$ be a convex open set. Then there exists  at most ~$2(n+1)$ closed sets ~$W_i\subset\R^n$ with nonempty interior, ~$i=1,\ldots,2(n+1)$, and Lipschitz functions ~$g_i:W_i\to \R$, such that the graphs of $g_i$
  $$
    \{(z,g_i(z)):\, z\in W_i\}=:M_i\,,
  $$
  satisfy that ~$\partial\Omega$ is the union 
  $$
    \partial\Omega=\bigcup_{i=1}^{2(n+1)}M_i\qquad\text{modulo rotations of each $M_i$} 
  $$
  up to a set of zero measure,
  and
  $$
    |\nabla g_i|\leq \sqrt{n+\epsilon}\quad\text{ in }W_i\,.
$$
\end{lemma}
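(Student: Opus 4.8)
\textbf{Proof plan for Lemma \ref{gy:lemma:split into Mi}.}

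The plan is to cover $\partial\Omega$ by finitely many pieces on which the outer normal $\nu$ points "mostly" in one of the $2(n+1)$ coordinate directions $\pm e_1,\dots,\pm e_{n+1}$, and then realize each such piece as a Lipschitz graph over the corresponding coordinate hyperplane using Lemma \ref{lemma:projection}. Concretely, for a unit vector $v$ set $A_v:=\{x\in\partial\Omega:\ \nu(x)\text{ exists and }\nu(x)\cdot v> \delta\}$ for a small $\delta=\delta(\epsilon,n)>0$ to be fixed. Since $\partial\Omega$ is Lipschitz, $\nu$ exists $\mathcal H^n$-a.e., and $|\nu(x)|=1$ forces $\max_i |\nu(x)\cdot e_i|\geq 1/\sqrt{n+1}$, so choosing $\delta<1/\sqrt{n+1}$ guarantees that the $2(n+1)$ sets $A_{\pm e_i}$ cover $\partial\Omega$ up to the measure-zero non-differentiability set. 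After a rotation sending $v$ to $e_{n+1}$, I claim $A_v$ is contained in the graph of the concave function $g^+$ from Lemma \ref{lemma:projection} (the ``upper'' part of $\partial\Omega$), restricted to the set $W_i$ of those $z'\in K_{n+1}$ where $g^+$ is differentiable with $|\nabla g^+(z')|\leq\sqrt{n+\epsilon}$; one then lets $g_i$ be a Lipschitz extension of $g^+|_{W_i}$ (or simply takes $W_i=\overline{P(A_v)}$ after checking the slope bound transfers, using convexity/concavity to control the Lipschitz constant on the closure). The point is that at a differentiability point of $\partial\Omega$ lying on the graph of $g^+$, the normal is $(-\nabla g^+,1)/\sqrt{1+|\nabla g^+|^2}$, so $\nu\cdot e_{n+1}>\delta$ is exactly equivalent to $1/\sqrt{1+|\nabla g^+|^2}>\delta$, i.e. $|\nabla g^+|<\sqrt{1/\delta^2-1}$; choosing $\delta$ so that $1/\delta^2-1=n+\epsilon$ gives both the covering property (since one also needs $\delta<1/\sqrt{n+1}$, which holds as $1/\delta^2-1=n+\epsilon>n$ gives $\delta<1/\sqrt{n+1}$) and the desired slope bound simultaneously.

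The main steps, in order: (i) invoke Lipschitz regularity of $\partial\Omega$ (a standard consequence of convexity) and Rademacher to get $\nu$ a.e.; (ii) for the fixed choice $\delta:=1/\sqrt{n+1+\epsilon}$, observe the pigeonhole bound $\max_i|\nu\cdot e_i|\geq 1/\sqrt{n+1}>\delta$, hence $\partial\Omega=\bigcup_{i=1}^{n+1}(A_{e_i}\cup A_{-e_i})$ modulo a null set; (iii) fix one direction, rotate it to $e_{n+1}$, and apply Lemma \ref{lemma:projection} to split $\partial\Omega$ into the convex graph of $g^-$ and the concave graph of $g^+$ over $K_{n+1}$ (the ``vertical'' part $\{g^-=g^+\}$ or the lateral boundary has measure zero in $\partial\Omega$, or can be absorbed), with $A_{e_{n+1}}$ landing in the $g^+$ part; (iv) at a.e.\ point of that part compute $\nu=(-\nabla g^+,1)/\sqrt{1+|\nabla g^+|^2}$, translate $\nu\cdot e_{n+1}>\delta$ into $|\nabla g^+|<\sqrt{n+\epsilon}$, and set $W_i$ to be the (closure of the) projection of $A_{e_{n+1}}$, $g_i$ a Lipschitz function on $W_i$ agreeing with $g^+$ there, with Lipschitz constant $\le\sqrt{n+\epsilon}$ (here concavity of $g^+$ helps: on the set where the gradient is bounded by $L$, $g^+$ is $L$-Lipschitz, and its restriction extends with the same constant by e.g.\ the Kirszbraun/McShane extension, or one just keeps the relevant sublevel set which is automatically convex-ish).

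I expect the main obstacle to be the bookkeeping in step (iii)--(iv): making precise the passage from ``$A_{e_{n+1}}$ is a subset of the graph of a concave function'' to ``$A_{e_{n+1}}$ (up to null sets) coincides with the graph of a genuinely Lipschitz $g_i$ over a closed set $W_i$ with nonempty interior,'' while ensuring the finitely many pieces truly cover $\partial\Omega$ up to $\mathcal H^n$-measure zero. The subtleties are: the portion of $\partial\Omega$ where the projection $P$ fails to be locally injective (the ``vertical walls'', where $g^-=g^+$ or where $g^\pm=\pm\infty$ on part of $\partial K_{n+1}$) — this set is $\mathcal H^n$-negligible for a convex body because it sits in a countable union of hyperplane pieces, or more simply because on it $\nu\cdot e_{n+1}=0<\delta$ so it is not claimed by $A_{e_{n+1}}$ and will be picked up by some other $A_{\pm e_j}$; and the ``nonempty interior'' requirement for $W_i$, which one handles by discarding (or merging into a neighbor) any piece whose projection is lower-dimensional, since such a piece again has zero $\mathcal H^n$-measure in $\partial\Omega$. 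Everything else — the pigeonhole argument, the explicit normal-to-graph formula, the equivalence $\nu\cdot e_{n+1}>\delta\iff|\nabla g^+|<\sqrt{n+\epsilon}$, and the a.e.\ differentiability — is routine.
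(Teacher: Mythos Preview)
Your proposal is correct and follows essentially the same approach as the paper: both fix $\delta=1/\sqrt{n+1+\epsilon}$, cover $\partial\Omega$ (up to a null set) by the sets where $|\nu_i|>\delta$ via the pigeonhole/norm argument, and then realize each piece as a graph over the corresponding coordinate hyperplane using Lemma~\ref{lemma:projection}, with the equivalence $\nu\cdot e_{n+1}>\delta\iff|\nabla g^+|<\sqrt{n+\epsilon}$. The only notable difference is in justifying that $W_i$ has nonempty interior: the paper invokes a differentiability result for convex functions (Rockafellar, Theorem~25.5) to show the strict gradient bound is open among differentiability points, whereas you propose discarding lower-dimensional projections---your argument works too, but the paper's is slightly cleaner here.
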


Note that the last statement on the upper bound on ~$|\nabla g_i|$ makes sense in the closed set ~$W_i$ (and not only in the interior), since a Lipschitz function can be extended to whole ~$\R^n$ with the same Lipschitz constant; see for instance \cite[Theorem 16.16]{CsatoDacKneuss}. Hence ~$\nabla g_i$ exists almost everywhere in ~$W_i$. 

Each function  $g_i$ is actually either convex or concave, but we will not need this statement.

We have introduced ~$\epsilon>0$ to make the proof as short and easy as possible, which certainly makes, in general,  many ~$M_i$ overlap. We have put no effort in constructing disjoint ~$M_i$, since even in that case  the method of proof would not give what we believe is the best constant in Theorem \ref{th:intro:key theorem}, and this would just have added unnecessary technical difficulties.

\begin{proof}
\textit{Step 1} Recall that on the boundary ~$\partial\Omega$ of a convex set the unit outward normal vector ~$\nu=(\nu_1,\ldots,\nu_{n+1})$ exists almost everywhere. We define 
$$
  \Gamma_1^-:=\left\{x\in\partial\Omega:\,\nu(x)\text{ exists and }\nu_1(x)< -\frac{1}{\sqrt{1+n+\epsilon}}\right\}.
$$
We assume that ~$\Gamma_1\neq \emptyset$ (if this is not the case, we can jump to the next step) and denote $\widehat{x}_1:=(x_2,\ldots,x_{n+1})\in \R^n$. Thus, using the projection map $P$ of Lemma \ref{lemma:projection} onto the plane $\{x_1=0\}$ (instead of the plane $\{x_{n+1}=0\}$),  the function ~$g_1^-(\widehat{x}_1):=\inf\{(P^{-1}(\widehat{x}_1))_1\}$ is convex (~$g_1^-$ must be finite, since the tangent plane at ~$x$ separates ~$\Omega$ and its complement) and we must have that
$$
  \nu(g_1^-(\widehat{x}_1),\widehat{x}_1)=\frac{(-1,\nabla g_1^-(\widex_1)   )}{\sqrt{1+|\nabla g_1^-(\widex_1)|^2}}
$$
for all ~$\widex_1\in D_1^-$, where ~$D_1^-$ is the set where ~$g_1^-$ is differentiable. We now define 
$$
  W_1^-:=\overline{\{\widex_1\in D_1^-:\,|\nabla g_1^-(\widex_1)|< \sqrt{n+\epsilon}\}\}}.
$$
By definition ~$W_1^-$ is closed. Moreover ~$W_1^-$ has nonempty interior, since for every ~$\widex_1\in W_1^-$ there exists a ~$\delta>0$ such that ~$|\nabla g_1^-(y)|\leq \sqrt{n+\epsilon}\}$ for all ~$|y-\widex_1|<\delta$ and  $y\in D_1^-$  (see \cite[Theorem 25.5, page 246]{Rockafellar}). 
Finally we define
$$
  M_1^-:=\{(g_1^-(\widex_1),\widex_1):\, \widehat{x}_1\in W_1^-\}
$$
Note that by construction ~$\Gamma_1^-\subset M_1^-$.

\smallskip

\textit{Step 2 (Complement of Step 1).} We now define the set (which we omit if it is empty) 
$$
  \Gamma_1^+:=\left\{x\in\partial\Omega:\,\nu(x)\text{ exists and }\nu_1(x)> \frac{1}{\sqrt{1+n+\epsilon}}\right\}.
$$
Proceeding as in Step 1 we define, for the concave function ~$g_1^+:=\sup\{(P^{-1}(\widehat{x}_1))_1\}$, the sets
$$
  W_1^+:=\overline{\{\widex_1\in D_1^+:\,|\nabla g_1^+(\widex_1)|< \sqrt{n+\epsilon}\}\}},
$$
where ~$D_1^+$ is the set where ~$g_1^+$ is differentiable. 
Note that now the first component of $\nu(g_1^+(\widex_1),\widex_1)$ is equal to $(1+|\nabla g_1^+(\widex_1)|^2)^{-1/2}$.
Finally we define 
$$
  M_1^+:=\{(g_1^+(\widex_1),\widex_1):\, \widehat{x}_1\in W_1^+\}.
$$
By construction the set of points $x\in\partial\Omega$ where $\nu$ exists and ~$|\nu_1|>(\sqrt{1+n+\epsilon})^{-1}$ is contained in ~$M_1^-\cup M_1^+$.
\smallskip

\textit{Step 3 (Iteration of Steps 1 and 2 in other coordinate directions).}
By repeating the previous construction in all the coordinate directions we obtain ~$2n+2$ sets ~$\Gamma_i^{\pm}$ such that 
$$
  \nu_i(x)<-\frac{1}{\sqrt{1+n+\epsilon}}\text{ on }\Gamma_i^{-},\text{ and }
  \nu_i(x)>\frac{1}{\sqrt{1+n+\epsilon}}\text{ on }\Gamma_i^{+},
$$
for ~$i=1,\ldots,n+1$. Moreover by construction each of these sets ~$\Gamma_i^{\pm}$ is contained in a graph of a function (up to a rotation) whose gradient is bounded by ~$\sqrt{n+\epsilon}$. 

Now observe that every point ~$x\in \delomega$ where ~$\nu(x)$ exists must be in one of the ~$\Gamma_i^{\pm}.$ If this were not true for some ~$x,$ then 
$$
  |\nu|^2(x)=\sum_{i=1}^{n+1}|\nu_i(x)|^2\leq\frac{n+1}{1+n+\epsilon}<1,
$$
which is a contradiction to ~$|\nu|=1$. Hence, up to a set of measure zero, ~$\delomega$ is contained in the union of the ~$\Gamma_i^{\pm}$ which in turn are contained in the union of the ~$M_i^{\pm}.$ This completes the proof of the lemma. 
\end{proof}

We now  give the proof of Theorem \ref{th:intro:key theorem}

\begin{proof}[Proof of Theorem \ref{th:intro:key theorem}]
Let ~$\epsilon$, ~$W_i$, ~$g_i$ and ~$M_i$ be as in Lemma \ref{gy:lemma:split into Mi}. We can chose $\epsilon=1$ for instance, but any $\epsilon>0$ does the job. For a measurable set ~$E\subset \partial\Omega$ we define ~$E_i:=E\cap M_i$. We shall also abbreviate the parametrizations ~$z\in W_i\mapsto(z,g_i(z))=\varphi_i(z)\in M_i$ and set ~$F_i=\varphi^{-1}(E_i)$. With these abbreviations and using Lemma ~\ref{gy:lemma:split into Mi} we have 
\begin{align*}
    \int_{E_i}\frac{1}{|x|^{\beta}}dx
    =&
    \int_{F_i}\frac{\sqrt{1+|\nabla g_i(z)|^2}}{(|z|^2+g_i^2(z))^{\beta/2}}dz
    \leq
    \sqrt{1+n+\epsilon}\int_{F_i}\frac{1}{|z|^{\beta}}dz
    \leq
    \sqrt{1+n+\epsilon}\int_{F_i^*}\frac{1}{|z|^{\beta}}dz,
\end{align*}
where ~$F_i^*$ is the Schwarz rearrangement of ~$F_i$ in $\R^n$, i.e., ~$F_i^*=\{z\in\R^n:\,|z|<R\}$ such that $\omega_n R^n=|F_i|$. Hence we obtain (recall the notation $\alpha_{n-1}=n\omega_n$)
\begin{align*}
        \int_{E_i}\frac{1}{|x|^{\beta}}dx
    \leq & \alpha_{n-1}\sqrt{1+n+\epsilon}\int_0^{R}r^{n-1-\beta}dr
    =\frac{\alpha_{n-1}\sqrt{1+n+\epsilon}}{n-\beta}\left(\frac{|F_i|}{\omega_n}\right)^{\frac{n-\beta}{n}}.
\end{align*}
Observe that ~$|F_i|\leq |E_i|$, which follows from integrating the inequality ~$1\leq \sqrt{1+|\nabla g_i|^2}$ over ~$F_i$, and thus we have proven that 
\begin{equation}
 \label{eq:gy:estimate for Ei}
   \int_{E_i}\frac{1}{|x|^{\beta}}dx\leq C_1 |E_i|^{\frac{n-\beta}{n}},\quad
   \text{ for }C_1=\frac{n\sqrt{1+n+\epsilon}}{n-\beta} \omega_n^{\frac{\beta}{n}}.
\end{equation}

Theorem \ref{th:intro:key theorem}  now follows by summing up the estimates of \eqref{eq:gy:estimate for Ei} over ~$i=1,\ldots, 2(n+1)$. We obtain from ~\eqref{eq:gy:estimate for Ei} and the generous estimate ~$|E_i|\leq |E|$ that 
\begin{align*}
    \int_{E}\frac{1}{|x|^{\beta}}dx\leq & \sum_{i=1}^{2(n+1)}\int_{E_i}\frac{1}{|x|^{\beta}}dx\leq C_1\sum_{i=1}^{2(n+1)}|E_i|^{\frac{n-\beta}{n}}
    \leq 
    2(n+1)C_1 |E|^{\frac{n-\beta}{n}},
\end{align*}
which proves the theorem with 
~$
  C=2(n+1)C_1
$.
\end{proof}


\section{Proof of the main Theorem for $1/\tau\leq a/p+(1-a)q$}
\label{section:proof of tau bigger p}

In this section we prove the main Theorem \ref{th: intro: Csato-Prosenjit} under the case
\begin{equation}
 \label{eq:assumption case 1}
  \frac{1}{\tau}\leq \frac{a}{p}+\frac{1-a}{q}.
\end{equation}
The proof of the theorem in the complementing case is given in the next section.

For the proof of Theorem \ref{th: intro: Csato-Prosenjit}
we have to present the notations and one of the central lemmas of \cite{Cabre-Cozzi-Csato}.
Let~${u}:{\delomega}\to \R$ be a bounded and non-negative measurable function with compact support. For~$i \in \Z$, we write
\begin{align*}
  A_i & :=\{{u}> 2^i\},\quad a_i:=|A_i|, \  D_i := A_i\setminus A_{i+1}=\left\{ 2^i< {u}\leq 2^{i+1}\right\}, \ \text{ and }\quad d_i:=|D_i|.
\end{align*}
We have that the sets $D_i$ are pairwise disjoint,
$$
  \{ u = 0\} \cup \bigcup_{\substack{ j\in \mathbb{Z}\\j\leq i}}D_j= \partial \Omega \setminus A_{i+1},\quad
  \bigcup_{\substack{ j\in\mathbb{Z}\\ j\geq i}} D_j=A_i\,,\quad\text{ and }\quad 
  a_i=\sum_{\substack{j\in \mathbb{Z}\\ j\geq i}} d_j.
$$

We will also need the following auxiliary lemma---see~\cite[Lemma 6.2]{HitchhikersG},
 which  only uses H\"older's inequality.
Note that, as~$u$ is non-negative, bounded and has compact support, our sequence~$\{ a_i \}$ satisfies the hypotheses of the next lemma for some~$N\in\mathbb{Z}.$

\begin{lemma}
\label{lemma:ineq of sums ak and akplus1}
Let~$s\in (0,1),$~$p\geq 1$ such that~$s p<n$, and~$N\in\mathbb{Z}.$ Suppose~$\{ a_i \}_{i \in \Z}$ is a bounded, non-negative, and non-increasing sequence with 
$$
  a_i=0\quad\text{ for all~$i\geq N$}.
$$
Then
\begin{equation}\label{eq: sec3: inequality involving series}
     \sum_{i\in \mathbb{Z}} 2^{pi} a_i^{(n-s p)/n} \leq 2^{p^{\ast}_s}\sum_{\substack {i\in\mathbb{Z} \\ a_i\neq 0}}2^{pi}a_i^{-s p/n}a_{i+1}.
\end{equation}
\end{lemma}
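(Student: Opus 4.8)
The plan is to exploit the monotonicity of $\{a_i\}$ together with the relation $a_i = \sum_{j\geq i} d_j$, where $d_j := a_j - a_{j+1} \geq 0$, and then apply H\"older's inequality in the discrete summation. First I would rewrite the left-hand side by the layer-cake-type identity: since $a_i - a_{i+1} = d_i$ telescopes and $a_i = \sum_{j \geq i} d_j$, one can group terms so that $\sum_{i} 2^{pi} a_i^{(n-sp)/n}$ gets compared termwise to a sum involving the increments $d_i$. Concretely, because $t \mapsto t^{(n-sp)/n}$ is concave (as $0 < (n-sp)/n < 1$, using $sp < n$) and subadditive, I expect the bound
\begin{equation*}
  a_i^{(n-sp)/n} = \Big(\sum_{j \geq i} d_j\Big)^{(n-sp)/n} \leq \sum_{j \geq i} d_j^{(n-sp)/n}
\end{equation*}
to be available, but this alone is too lossy; the sharp move is instead to split $a_i^{(n-sp)/n} = a_i^{-sp/n} \cdot a_i$ and then use $a_i = a_{i+1} + d_i \leq a_{i+1} + d_i$, together with a careful reindexing.

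The key step I would carry out is the following. Write $a_i = \sum_{j \geq i} d_j$ and interchange the order of summation:
\begin{equation*}
  \sum_{i \in \mathbb{Z}} 2^{pi} a_i^{-sp/n} a_i = \sum_{i \in \mathbb{Z}} 2^{pi} a_i^{-sp/n} \sum_{j \geq i} d_j = \sum_{j \in \mathbb{Z}} d_j \sum_{i \leq j} 2^{pi} a_i^{-sp/n}.
\end{equation*}
Now, since $\{a_i\}$ is non-increasing, $a_i \geq a_j$ for $i \leq j$, hence $a_i^{-sp/n} \leq a_j^{-sp/n}$ (the exponent $-sp/n$ is negative), so the inner sum is bounded by $a_j^{-sp/n} \sum_{i \leq j} 2^{pi} = a_j^{-sp/n} \cdot \frac{2^{p(j+1)}}{2^p - 1} \leq 2^{p} \cdot 2^{pj} a_j^{-sp/n}$ when $p \geq 1$ (using $2^p - 1 \geq 1$, so $\frac{2^p}{2^p-1} \leq 2^p$). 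Wait — I should be more careful with constants here; the cleaner route is to note $\sum_{i \leq j} 2^{pi} = \frac{2^{pj}}{1 - 2^{-p}}$ and absorb the geometric factor. Matching against the stated constant $2^{p_s^*}$ on the right will require tracking that $p_s^* = np/(n-sp) \geq p$, so $2^{p_s^*} \geq 2^p \geq \frac{1}{1-2^{-p}} \cdot (\text{something})$; I would verify the precise inequality $\frac{2^p}{1-2^{-p}} \leq 2^{p_s^*}$ or adjust the grouping so the constant works out.

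The main obstacle I anticipate is getting the constant exactly $2^{p_s^*}$ rather than merely some constant depending on $p, s, n$: the naive estimate via swapping summation and monotonicity gives a factor like $\frac{2^p}{1-2^{-p}}$, and one must check this is dominated by $2^{p_s^*}$, or else re-sum more cleverly (e.g., comparing $\sum_{i \leq j} 2^{pi} a_i^{-sp/n}$ to a single geometric tail anchored at $a_j$ and using $a_i^{(n-sp)/n} - a_{i+1}^{(n-sp)/n}$ telescoping). A secondary, purely bookkeeping point is justifying all the interchanges of summation — these are legitimate because $a_i = 0$ for $i \geq N$ makes every sum effectively finite on the relevant side and all terms are non-negative, so Tonelli applies. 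Since the paper only uses this as a black-box lemma from \cite{HitchhikersG}, I would present the argument above as the self-contained proof, being careful only with the constant.
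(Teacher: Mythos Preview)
Your central computation has a genuine gap: it yields the \emph{reverse} inequality. After interchanging sums and using $a_i^{-sp/n}\le a_j^{-sp/n}$ for $i\le j$, you obtain
\[
S:=\sum_i 2^{pi}a_i^{(n-sp)/n}\ \le\ \frac{1}{1-2^{-p}}\sum_j 2^{pj}a_j^{-sp/n}\,d_j.
\]
But $d_j=a_j-a_{j+1}$, so the right-hand side equals $\tfrac{1}{1-2^{-p}}(S-R)$, where $R=\sum_{a_i\ne0}2^{pi}a_i^{-sp/n}a_{i+1}$ is the target sum. Rearranging $(1-2^{-p})S\le S-R$ gives $R\le 2^{-p}S$, i.e.\ $S\ge 2^pR$, the opposite of what the lemma asserts. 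This is not a matter of tracking constants: monotonicity of $a_i^{-sp/n}$ bounds the inner sum from \emph{above}, which ends up controlling $R$ by $S$ rather than $S$ by $R$. Your fallback suggestion of telescoping $a_i^{(n-sp)/n}-a_{i+1}^{(n-sp)/n}$ is not developed, and nothing in the proposal indicates how it would close this gap.

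The proof the paper invokes (from \cite[Lemma~6.2]{HitchhikersG}) is a single H\"older inequality, used quite differently. With $T:=(n-sp)/n\in(0,1)$, factor each summand as
\[
2^{pi}a_i^{T}=\bigl(2^{pi}a_{i-1}^{T-1}a_i\bigr)^{T}\bigl(2^{pi}a_{i-1}^{T}\bigr)^{1-T},
\]
valid for every $i$ with $a_i\ne0$ (then $a_{i-1}\ge a_i>0$). H\"older with exponents $1/T$ and $1/(1-T)$ gives
\[
S\le\Bigl(\sum_{a_i\ne0}2^{pi}a_{i-1}^{T-1}a_i\Bigr)^{T}\Bigl(\sum_{a_i\ne0}2^{pi}a_{i-1}^{T}\Bigr)^{1-T}\le(2^pR)^{T}(2^pS)^{1-T}=2^p\,R^{T}S^{1-T},
\]
using an index shift in each factor. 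Since $S<\infty$ (the sequence is bounded and vanishes for $i\ge N$), divide by $S^{1-T}$ to obtain $S^{T}\le 2^pR^{T}$, hence $S\le 2^{p/T}R=2^{p_s^\ast}R$.
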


Note that, from the hypotheses made on the sequence $\{a_{i}\}$ in the lemma, clearly both series are convergent. Moreover after re-indexing the sum in the 
right hand side of \eqref{eq: sec3: inequality involving series},
with ~$i$ replaced by ~$i-1$, \eqref{eq: sec3: inequality involving series} reads as 
\begin{equation}\label{eq: sec3: inequality involving series 2}
     \sum_{i\in \mathbb{Z}} 2^{pi} a_i^{(n-s p)/n} \leq 2^{p^{\ast}_s-p}\sum_{\substack {i\in\mathbb{Z} \\ a_{i-1}\neq 0}}2^{pi}a_{i-1}^{-s p/n}a_{i}.
\end{equation}

The next lemma of \cite{Cabre-Cozzi-Csato} is what we need for  the proof of our main  Theorem \ref{th: intro: Csato-Prosenjit} and it is through this lemma that the fractional mean curvature enters. It is the generalisation of the analogous result \cite[Lemma 6.3]{HitchhikersG} in the flat case to convex hypersurfaces.

\begin{lemma}\cite[Lemma 4.3]{Cabre-Cozzi-Csato}]
\label{Analogue Lemma 6.3 in HG} Let~$s\in (0,1),$~$p \ge 1$ such that~$n > s p$, and~$\Omega\subset\R^{n+1}$ be an open convex set. Let~${u}\in L^{\infty}({\delomega})$ be a non-negative function with compact support. 
Then, 
\begin{align*}
 \frac{1}{2}\int_{{\delomega}}\int_{{\delomega}}\frac{|{u}(x)-{u}(y)|^p}{|x-y|^{n+s p}}\, {dx}{dy}
 +\int_{{\delomega}} H_{\alpha}(x)^{\frac{s p}{\alpha}} {u}(x)^p \, {dx} \ge c \sum_{\substack{i\in\Z \\ a_{i-1}\neq 0}}2^{pi}a_{i-1}^{-s p/n}a_i \,,
\end{align*}
for some constant~$c>0$ depending only on~$n$,~$\alpha$,~$s$, and~$p$.
\end{lemma}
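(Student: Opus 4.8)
The plan is to establish Lemma \ref{Analogue Lemma 6.3 in HG} by adapting the flat-space argument of \cite[Lemma 6.3]{HitchhikersG} to convex hypersurfaces, exactly as in \cite{Cabre-Cozzi-Csato}, so I will follow that route. The starting point is to discard the positive curvature term and keep only what is needed, then concentrate on bounding the Gagliardo-type double integral from below by the series. Fixing $i\in\Z$ with $a_{i-1}\neq0$, I would restrict the double integral to the region $x\in D_{i}=A_i\setminus A_{i+1}$ (where roughly $u\approx 2^i$) and $y\in \delomega\setminus A_{i-1}$ (where $u\le 2^{i-1}$, so $|u(x)-u(y)|\ge 2^{i-1}$). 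On that region the integrand dominates $c\,2^{pi}|x-y|^{-n-sp}$, so after integrating in $y$ one is left to bound $\int_{\delomega\setminus A_{i-1}}|x-y|^{-n-sp}\,dy$ from below by a power of $\dist$-type quantities, and this is where the curvature term is actually used: whenever that tail integral is \emph{small}, the nonlocal mean curvature $H_\alpha(x)$ must be \emph{large}, and the term $\int H_\alpha^{sp/\alpha}u^p$ compensates. Quantitatively, one splits $\delomega = A_{i-1}\sqcup(\delomega\setminus A_{i-1})$ inside the definition \eqref{Halphadef} of $H_\alpha(x)$ to get a pointwise comparison of the form $\int_{\delomega\setminus A_{i-1}}|x-y|^{-n-sp}\,dy + (\text{const})\,H_\alpha(x)^{sp/\alpha}\ge c\,a_{i-1}^{-sp/n}$ for $x\in D_i$, using that $|A_{i-1}\setminus\Omega|$-type contributions to $H_\alpha$ are controlled, together with the elementary fact (a consequence of convexity, or of Theorem \ref{th:intro:key theorem}-type reasoning applied to the sublevel set) that $\int_{A_{i-1}}|x-y|^{-n-sp}\,dy$ is bounded above by something like $a_{i-1}^{-sp/n}$ up to a universal constant.

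Concretely, I would prove the pointwise inequality: there is $c=c(n,\alpha,s,p)>0$ such that for a.e. $x\in D_i$,
\begin{equation*}
  \int_{\delomega\setminus A_{i-1}}\frac{dy}{|x-y|^{n+sp}} + \Bigl(c^{-1}H_\alpha(x)\Bigr)^{\frac{sp}{\alpha}} \;\ge\; c\,a_{i-1}^{-\frac{sp}{n}}.
\end{equation*}
To see this, pick the radius $\rho>0$ with $|\delomega\cap B_\rho(x)| = a_{i-1}$ (using that $r\mapsto|\delomega\cap B_r(x)|$ is continuous and, by convexity and Theorem \ref{th:intro:key theorem}, grows at most like $r^n$, hence $\rho^n\ge c\,a_{i-1}$). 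If more than half of the mass of $\delomega\cap B_\rho(x)$ lies outside $A_{i-1}$, then $\int_{\delomega\setminus A_{i-1}}|x-y|^{-n-sp}\,dy \ge \int_{(\delomega\cap B_\rho(x))\setminus A_{i-1}}\rho^{-n-sp}\,dy \ge \tfrac12 a_{i-1}\rho^{-n-sp}\ge c\,a_{i-1}^{-sp/n}$ and we are done. Otherwise at least half of that mass lies in $A_{i-1}$, i.e. $|A_{i-1}\cap B_\rho(x)|\ge \tfrac12 a_{i-1}$; then I would argue that $\Omega$ must occupy a definite portion of the ball $B_\rho(x)$ near $x$ as well — because $A_{i-1}\subset\delomega$ is a large piece of hypersurface inside $B_\rho(x)$ and $\delomega$ is convex — which forces the signed integral in \eqref{Halphadef} to be bounded below, giving $H_\alpha(x)\ge c\,\rho^{-\alpha}\ge c\,a_{i-1}^{-\alpha/n}$, hence $H_\alpha(x)^{sp/\alpha}\ge c\,a_{i-1}^{-sp/n}$. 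This geometric step is essentially \cite[proof of Lemma 4.3]{Cabre-Cozzi-Csato} and is the one place convexity is indispensable.

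Granting the pointwise inequality, I would finish by integrating over $x\in D_i$: multiplying by $2^{pi}$ and using $u(x)^p\ge c\,2^{pi}$ on $D_i$,
\begin{equation*}
  \int_{D_i}\!\int_{\delomega\setminus A_{i-1}}\frac{|u(x)-u(y)|^p}{|x-y|^{n+sp}}\,dy\,dx + c\int_{D_i}H_\alpha(x)^{\frac{sp}{\alpha}}u(x)^p\,dx \;\ge\; c\,2^{pi}a_{i-1}^{-\frac{sp}{n}}\,d_i,
\end{equation*}
and then I would sum over $i\in\Z$. On the left, the double-integral contributions sum to at most $\tfrac12\int_{\delomega}\int_{\delomega}|u(x)-u(y)|^p|x-y|^{-n-sp}$ because the product sets $D_i\times(\delomega\setminus A_{i-1})$ have bounded overlap (each unordered pair is counted a controlled number of times, or one symmetrizes), and the curvature contributions sum to $\int_{\delomega}H_\alpha^{sp/\alpha}u^p$ since the $D_i$ are disjoint. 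On the right, using $a_i=\sum_{j\ge i}d_j\le$ a few $d_j$'s after the standard regrouping (or rather, using $2^{pi}a_{i-1}^{-sp/n}a_i = 2^{pi}a_{i-1}^{-sp/n}\sum_{j\ge i}d_j$ and Fubini to compare with $\sum_i 2^{pi}a_{i-1}^{-sp/n}d_i$ via monotonicity of $\{a_i\}$ and the geometric weights $2^{pi}$), one recovers $c\sum_{i:a_{i-1}\neq0}2^{pi}a_{i-1}^{-sp/n}a_i$. The main obstacle is the geometric/curvature dichotomy in the second paragraph — quantifying ``$A_{i-1}$ occupies half of $B_\rho(x)$'' $\Rightarrow$ ``$H_\alpha(x)$ is large'' with a constant independent of $\Omega$; everything else is bookkeeping with the sequences $\{a_i\},\{d_i\}$ already set up before the lemma.
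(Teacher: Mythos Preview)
The paper does not give its own proof of this lemma; it is quoted verbatim from \cite[Lemma~4.3]{Cabre-Cozzi-Csato} and used as a black box. Your sketch has the right overall architecture --- restrict to $D_i\times(\delomega\setminus A_{i-1})$, seek a pointwise lower bound, then sum --- and the summation bookkeeping in your last paragraph is essentially fine (the regions $D_i\times(\delomega\setminus A_{i-1})$ are pairwise disjoint and lie in $\{u(x)>u(y)\}$, and the passage from $d_i$ to $a_i$ on the right works via $a_i=\sum_{j\ge i}d_j$, the geometric sum $\sum_{i\le j}2^{pi}\le C\,2^{pj}$, and the monotonicity $a_{i-1}^{-sp/n}\le a_{j-1}^{-sp/n}$ for $i\le j$).

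The gap is in the pointwise inequality: \emph{both} branches of your dichotomy fail. In Case~1 your final step $\tfrac12 a_{i-1}\rho^{-n-sp}\ge c\,a_{i-1}^{-sp/n}$ is equivalent to $\rho\le C\,a_{i-1}^{1/n}$; but your choice $|\delomega\cap B_\rho(x)|=a_{i-1}$ together with the convex upper density bound $|\delomega\cap B_\rho(x)|\le C\rho^n$ only gives $\rho\ge c\,a_{i-1}^{1/n}$, the \emph{opposite} inequality --- there is no universal lower density bound (for a sphere of radius $R$ and $\rho\gg R$ one has $|\delomega\cap B_\rho(x)|\approx R^n\ll\rho^n$). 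In Case~2 the implication ``$|A_{i-1}\cap B_\rho(x)|\ge\tfrac12 a_{i-1}\Rightarrow H_\alpha(x)\ge c\,\rho^{-\alpha}$'' is simply false: take $\Omega$ a half-space (so $H_\alpha\equiv0$) and $A_{i-1}=\delomega\cap B_\rho(x)$; then all of $\delomega\cap B_\rho(x)$ lies in $A_{i-1}$, you are squarely in Case~2, yet $H_\alpha(x)=0$. The position of $A_{i-1}$ inside $B_\rho(x)$ carries no curvature information; what makes $H_\alpha$ large is a \emph{deficit} of hypersurface mass at large scales, not an abundance inside $B_\rho$. The actual argument in \cite{Cabre-Cozzi-Csato} does not split according to where $A_{i-1}$ sits but first reduces (by rearrangement) to a purely geometric inequality linking $\int_{\delomega\setminus B_\rho(x)}|x-y|^{-n-sp}\,dy$, the density profile $r\mapsto|\delomega\cap B_r(x)|$, and $H_\alpha(x)$; it is this inequality, and not your dichotomy, that encodes the convexity.
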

Applying ~\eqref{eq: sec3: inequality involving series 2} the conclusion of the above lemma can be restated as 
\begin{align}\label{eq: sec3: inequality to be used 2}
 \frac{1}{2}\int_{{\delomega}}\int_{{\delomega}}\frac{|{u}(x)-{u}(y)|^p}{|x-y|^{n+s p}}\, {dx}{dy}
 +\int_{{\delomega}} H_{\alpha}(x)^{\frac{s p}{\alpha}} {u}(x)^p \, {dx} \ge c \sum_{i\in\Z }2^{pi}a_{i}^{(n-sp)/n}.
\end{align}
In fact, ~\eqref{eq: sec3: inequality to be used 2}  is the inequality that we will use in the proof of ~Theorem \ref{th: intro: Csato-Prosenjit}. 

\begin{proof}[Proof of Theorem \ref{th: intro: Csato-Prosenjit}. for $1/\tau\leq a/p+(1-a)/q$.]  First, notice that it sufficient to prove the theorem for ~$|u|$, where ~$u\in W^{s,p}(\partial\Omega).$
This follows from the observation that the  left hand side of \eqref{eq: intro: Our Main result} is the same for both ~$u$ and ~$|u|$, whereas in the right hand side one can use the inequality
$$
  ||u(x)|-|u(y)|| \leq |u(x)-u(y)|.
$$
Furthermore, we can assume that $u$ is bounded, using truncation. We can also assume that $u$ has compact support. This last assumption on the support is less obvious, but can be done by a cutoff function identically as in \cite[Section 4, Proof of Theorem 1.3]{Cabre-Cozzi-Csato}. 

 Consider now the left hand side of \eqref{eq: intro: Our Main result}. Using the decomposition,

$$
\bigcup_{\substack{ j\in \mathbb{Z}}}D_j= \partial \Omega\cap\{u>0\}, 
$$
one has 
\begin{equation}
    \label{eq:section 3: decompositon of boundary}
    \int_{\partial \Omega} |x|^{\tau\gamma}|u(x)|^{\tau}dx 
    = \sum_{j \in\mathbb{Z} } \int_{D_j}|x|^{\tau\gamma}|u(x)|^{\tau}dx
    \leq  \sum_{j \in\mathbb{Z} } 2^{(j+1)\tau}\int_{D_j}|x|^{\tau\gamma}dx.
\end{equation}
By assumption we have that $1/\tau+\gamma/n>0$ and thus ~$-\tau\gamma \in [0,n)$. Now  apply ~Theorem \ref{th:intro:key theorem} with ~$\beta = -\tau\gamma$ and ~$E=D_j$ to \eqref{eq:section 3: decompositon of boundary}, to obtain a constant ~$C$ depending only on ~$\tau, \gamma$ and ~$n$, such that
\begin{align*}
    \label{section 3: decompositon of boundary and lemma applied}
    \int_{\partial \Omega} |x|^{\tau\gamma}|u(x)|^{\tau}dx  
    \leq & C \sum_{j \in\mathbb{Z} } 2^{(j+1)\tau}|D_j|^{1+\frac{\tau\gamma}{n}} = (C2^\tau) \sum_{j \in\mathbb{Z} } 2^{j\tau}|D_j|^{1+\frac{\tau\gamma}{n}} \\
    = &(C2^\tau) \sum_{j \in\mathbb{Z} } 2^{j\tau}d_j^{1+\frac{\tau\gamma}{n}} .
\end{align*}
Now use \eqref{eq:intro: relation between the parameters s,p, tau, gamma} to obtain 
\begin{equation}
    \label{eq: sec3: estimates}
      \int_{\partial \Omega} |x|^{\tau\gamma}|u(x)|^{\tau}dx 
      \leq  C \sum_{j \in\mathbb{Z} } 2^{j\tau}d_j^{\frac{a\tau}{p_s^{*}} + \frac{(1-a)\tau}{q}}
      = C \sum_{j \in\mathbb{Z} } \left\{ \left(2^{jp}d_j^{\frac{p}{p_s^*}}\right)^{\frac{\tau a}{p}} \left(2^{jq}d_j\right)^{\frac{\tau (1-a)}{q}} \right\}.
\end{equation} 

We shall now use the following generalized H\"older inequality: 
let ~$t_1, t_2 \geq 0$ with ~$t_1 + t_2 \geq 1$, then for ~$x_k, y_k \geq 0$, one has 
    \begin{equation}
    \label{eq:gy:Holder general}
    \sum_{k\in \mathbb{Z}}x_k^{t_1}y_k^{t_2} \leq \left( \sum_{k\in \mathbb{Z}} x_k\right)^{t_1}\left( \sum_{k\in \mathbb{Z}}y_k \right)^{t_2}.
    \end{equation}
If one exponent is zero, for instance $t_2=0$, then the inequality is understood as $\sum x_k^{t_1}\leq (\sum x_k)^{t_1}$. The proof can be found for instance in \cite[Theorems 11, 22, and 167]{Hardy} after excluding some special elementary cases.

Now applying \eqref{eq:gy:Holder general} with ~$x_k = 2^{jp}d_j^{\frac{p}{p_s^*}}$, ~~$y_k =2^{jq}d_j$, ~$t_1 = \frac{\tau a}{p}$ and ~$t_2= \frac{(1-a)\tau}{q}$ together with \eqref{eq: sec3: estimates} we get, using also \eqref{eq:assumption case 1},
\begin{equation}
    \label{eq: sec3: after apllying the inequality}
      \int_{\partial \Omega} |x|^{\tau\gamma}|u(x)|^{\tau}dx 
      \leq  C \left(\sum_{j\in \mathbb{Z}} 2^{jp}d_j^{\frac{p}{p_s^*}}\right)^{\frac{\tau a}{p}} \left(\sum_{j\in \mathbb{Z}}2^{jq}d_j\right)^{\frac{\tau (1-a)}{q}}.
\end{equation}
Independently, we also have 
\begin{equation}
    \label{eq: sec 3: estimate of lp norm}
    \int_{\partial \Omega} |u(x)|^qdx  = \sum_{j \in\mathbb{Z} } \int_{D_j}|u(x)|^{q}dx \geq  \sum_{j \in\mathbb{Z}} 2^{jq} |D_j| = \sum_{j \in\mathbb{Z}} 2^{jq} d_j.
\end{equation}
Clearly, the result  now follows after applying ~\eqref{eq: sec 3: estimate of lp norm} and ~\eqref{eq: sec3: inequality to be used 2} to ~\eqref{eq: sec3: after apllying the inequality} ---bearing in mind that $d_i\leq a_i$ and $p/p_s^*=(n-sp)/n$.

\end{proof}

\section{Proof of the main Theorem for $1/\tau>a/p+(1-a)/q$ }

In this section we prove the main Theorem \ref{th: intro: Csato-Prosenjit} under the assumption
\begin{equation}
 \label{eq:assumption case 2}
  \frac{1}{\tau}> \frac{a}{p}+\frac{1-a}{q}.
\end{equation}
This assumption, together with the dimensional balance condition \eqref{eq:intro: relation between the parameters s,p, tau, gamma}, gives that
\begin{equation}
    \label{eq:sec4: gamma negative}
    \gamma < -as. 
\end{equation}

The method of the proof follows the strategy used in \cite[Section (V)]{CKN} or \cite{Squassina2018}. This method consists in estimating each of 
$$
  \||x|^{\gamma}u\|_{L^{\tau}(\partial\Omega\cap B_1(0))}\quad\text{ and }\quad
  \||x|^{\gamma}u\|_{L^{\tau}(\partial\Omega\setminus B_1(0))}
$$
separately, by using appropriately H\"older inequality in two different ways to reduce to the case already proven in Section \ref{section:proof of tau bigger p}. However, to carry out this strategy on manifolds, we need some estimates on the integrals of $|x|^{-\beta}$ on the manifold intersected with the unit ball or its complement. Such estimates are only possible under additional assumptions on the manifold and once more we use the convexity assumption.

\begin{lemma}
    \label{lemma:weighted integrals on M}
     Let~$\Omega\subset\R^{n+1}$ be an open convex set, ~$B_1(0)$ is the $n+1$ dimensional ball centered at the origin, and $\beta\in\R$. Then, there exists a  constant~$C_1$ depending only on~$n$ and ~$\beta$  such that 

     \begin{equation}
         \label{eq:ball complement}
\int_{\delomega\setminus B_1(0)}|x|^{-\beta}dx \leq C_1\quad\text{ if }\beta>n.
     \end{equation}
And there exists a  constant ~$C_2$ depending only on~$n$ and ~$\beta$  such that
          \begin{equation}
         \label{eq:bound in ball}
\int_{\delomega\cap B_1(0)}|x|^{-\beta}dx \leq C_2\quad\text{ if }\beta<n.
     \end{equation}
\end{lemma}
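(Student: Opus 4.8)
\textbf{Proof plan for Lemma \ref{lemma:weighted integrals on M}.} The natural strategy is to reduce both estimates to Theorem \ref{th:intro:key theorem} by a dyadic decomposition of $\R^{n+1}$ into annuli centered at the origin and by exploiting that $\partial\Omega$ has small measure in any fixed ball --- indeed, since $\Omega$ is convex, $\partial\Omega\cap B_R(0)$ is the boundary of the convex set $\Omega\cap B_R(0)$ minus at most $\partial B_R(0)$, so by the monotonicity of perimeter under inclusion for convex sets one has $|\partial\Omega\cap B_R(0)|\le |\partial B_R(0)| = \alpha_n R^n$ (where $\alpha_n = (n+1)\omega_{n+1}$ is the area of the unit $n$-sphere). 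This linear-in-$R^n$ bound is the one new geometric input and it holds uniformly in $\Omega$.

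\emph{For \eqref{eq:bound in ball}:} write $\partial\Omega\cap B_1(0) = \bigcup_{k\ge 0} E_k$ with $E_k := \partial\Omega\cap (B_{2^{-k}}(0)\setminus B_{2^{-k-1}}(0))$. On $E_k$ one has $|x|^{-\beta}\le 2^{(k+1)\beta}$, while $|E_k|\le |\partial\Omega\cap B_{2^{-k}}(0)|\le \alpha_n 2^{-kn}$. Hence
\begin{equation*}
  \int_{\partial\Omega\cap B_1(0)}|x|^{-\beta}\,dx \le \sum_{k\ge 0} 2^{(k+1)\beta}\alpha_n 2^{-kn} = 2^{\beta}\alpha_n\sum_{k\ge 0} 2^{-k(n-\beta)},
\end{equation*}
which converges precisely because $\beta<n$, giving $C_2 = 2^\beta\alpha_n/(1-2^{-(n-\beta)})$. (One may alternatively invoke Theorem \ref{th:intro:key theorem} annulus-by-annulus, but the crude measure bound above already suffices and keeps the constant explicit.)

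\emph{For \eqref{eq:ball complement}:} decompose $\partial\Omega\setminus B_1(0) = \bigcup_{k\ge 0} \widetilde E_k$ with $\widetilde E_k := \partial\Omega\cap(B_{2^{k+1}}(0)\setminus B_{2^k}(0))$. Here $|x|^{-\beta}\le 2^{-k\beta}$ on $\widetilde E_k$, but now a naive bound $|\widetilde E_k|\le |\partial\Omega\cap B_{2^{k+1}}(0)|$ is useless since $\partial\Omega$ may have infinite total measure. Instead apply Theorem \ref{th:intro:key theorem} directly to $E=\widetilde E_k$ with the exponent $\beta'\in[0,n)$ of our choice: since $|x|^{-\beta'}\ge 2^{-(k+1)\beta'}$ on $\widetilde E_k$, Theorem \ref{th:intro:key theorem} gives $2^{-(k+1)\beta'}|\widetilde E_k|\le \int_{\widetilde E_k}|x|^{-\beta'}\,dx\le C(n,\beta')|\widetilde E_k|^{(n-\beta')/n}$, whence $|\widetilde E_k|\le (C(n,\beta'))^{n/\beta'} 2^{(k+1)n}$. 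This polynomial growth, combined with $\beta>n$, yields
\begin{equation*}
  \int_{\partial\Omega\setminus B_1(0)}|x|^{-\beta}\,dx \le \sum_{k\ge 0} 2^{-k\beta}|\widetilde E_k| \le (C(n,\beta'))^{n/\beta'} 2^n \sum_{k\ge 0} 2^{-k(\beta-n)} < \infty,
\end{equation*}
and the sum converges since $\beta>n$. Choosing say $\beta'=n/2$ gives a constant $C_1$ depending only on $n$ and $\beta$.

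The only point requiring care --- and the main (mild) obstacle --- is the measure bound $|\partial\Omega\cap B_R(0)|\le \alpha_n R^n$, or more precisely the fact that for the second estimate one \emph{cannot} use such a bound and must route through Theorem \ref{th:intro:key theorem} itself, which is what makes convexity indispensable: without it $|\widetilde E_k|$ can grow arbitrarily fast in $k$ (oscillating hypersurfaces), destroying summability. Everything else is a routine geometric-series computation. One should also note the edge case $\partial\Omega=\emptyset$ or $\partial\Omega\cap B_1(0)=\emptyset$, where the integrals are zero and there is nothing to prove, and the case where $0\notin\overline\Omega$ or $0\in\partial\Omega$, which is harmless since the argument never uses the location of the origin relative to $\Omega$.
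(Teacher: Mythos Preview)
Your proof is correct, and for \eqref{eq:ball complement} it is genuinely different from --- and considerably shorter than --- the paper's argument. The paper proves \eqref{eq:ball complement} by going back to the graph decomposition of Lemma~\ref{gy:lemma:split into Mi}, introducing for each direction $\sigma\in\mathbb{S}^{n-1}$ the first exit radius $r_0(\sigma)$ from $B_1$, and then carefully estimating the radial integral $\int_{r_0(\sigma)}^\infty (r^2+g(r\sigma)^2)^{-\beta/2}r^{n-1}\,dr$ via the mean value theorem and a Young-type inequality. Your dyadic annulus decomposition bypasses all of this: once you know $|\partial\Omega\cap B_R|\le \alpha_n R^n$, the geometric series does the rest. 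For \eqref{eq:bound in ball} the two approaches are closer in spirit; the paper applies Theorem~\ref{th:intro:key theorem} once to $E=\partial\Omega\cap B_1$ and bounds $|\partial(\Omega\cap B_1)|$ by $|\partial B_1|$ via perimeter monotonicity, whereas you slice into dyadic annuli first --- both ultimately rest on the same monotonicity input.

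Two small points. First, in your proof of \eqref{eq:bound in ball} the bound $|x|^{-\beta}\le 2^{(k+1)\beta}$ on $E_k$ is only valid for $\beta\ge 0$; when $\beta<0$ one has instead $|x|^{-\beta}\le 1$ on $B_1$, and then the whole integral is bounded directly by $|\partial\Omega\cap B_1|\le\alpha_n$ with no dyadic decomposition needed. Second, your remark that the ``naive bound $|\widetilde E_k|\le|\partial\Omega\cap B_{2^{k+1}}|$ is useless'' is not right: combined with the perimeter monotonicity you already invoked, it gives $|\widetilde E_k|\le\alpha_n 2^{(k+1)n}$ immediately, which is exactly the estimate you then recover by a detour through Theorem~\ref{th:intro:key theorem}. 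Using monotonicity directly makes the argument for \eqref{eq:ball complement} even shorter and avoids having to justify that $|\widetilde E_k|<\infty$ before manipulating the inequality from Theorem~\ref{th:intro:key theorem}.
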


\begin{proof}
[Proof of \eqref{eq:ball complement}:]   In view of Lemma \ref{gy:lemma:split into Mi} it is sufficient to prove \eqref{eq:ball complement} with $\partial \Omega$ replaced by $M$,  where $M$ is the graph of a Lipschitz function $g$ whose gradient is bounded, i.e., 
$$M := \left\{(z,g(z)) 
 | \ g : W \subset \mathbb{R}^n \rightarrow \mathbb{R}, \ |\nabla g(z)| \leq  \sqrt{n+1} \right\}. 
 $$
Define the map ~$\phi(z):= (z,g(z))$. In addition, we can assume without loss of generality that $W=\R^n.$  Thus we obtain that
\begin{equation}
    \label{eq: integral representation}
    \int_{M\setminus B_1(0)} \frac{1}{|x|^\beta}dx = \int_{\mathbb{R}^n \setminus \phi^{-1}(B_1(0))} \sqrt{1+|\nabla g(z)|^2} \left( |z|^2+ |g(z)|^2\right)^{-\frac{\beta}{2}}dz.
\end{equation}
For each ~$\sigma \in \mathbb{S}^{n-1}$, define
$$
  r_0(\sigma) := \left\{
   \begin{array}{l}
   \sup \left\{ t\in [0,\infty) \ | \  (s\sigma, g(s\sigma))   
  \in B_1(0)\, \ \forall s \in [0,t] \right\}
   \smallskip \\
   0 \quad\text{ if } \left\{ t\in [0,\infty) \ | \  (s\sigma, g(s\sigma))   
  \in B_1(0)\, \ \forall s \in [0,t] \right\}=\emptyset.
  \end{array}
  \right.
$$
Geometrically, $r_0(\sigma)$ is the first time when the graph of $g$ leaves the unit ball, when moving from the origin in the direction $\sigma.$ 
By definition of $r_0$ we have that 
\begin{equation}
 \label{r0 sigma less one}
  r_0(\sigma)\leq 1\quad\text{ for all }\sigma\in \mathbb{S}^{n-1}.
\end{equation}
By continuity of $g$ we must also have that  
\begin{equation}
    \label{eq: rsigma relation}
    r_0^2(\sigma) + g^2(r_0(\sigma)\sigma) \geq 1.
\end{equation}
We make a change of variables in the integral on the right hand side of \eqref{eq: integral representation} and use the inclusion
$$M\setminus B_1(0) \subset  \left\{ (r\sigma, g(r\sigma)) \ :  \ \sigma \in \mathbb{S}^{n-1}, \ r \geq r_0(\sigma)\right\}$$
to conclude the inequality
\begin{equation*}
    \label{eq: integral representation 2}
    \int_{M\setminus B_1(0)} \frac{1}{|x|^\beta}dx \leq \int_{\mathbb{S}^{n-1}}d\sigma   \int_{r_0(\sigma)}^\infty \sqrt{1+|\nabla g(r\sigma)|^2} \left( |r|^2+ |g(r\sigma)|^2\right)^{-\frac{\beta}{2}}r^{n-1}dr.
\end{equation*}
Using that $|\nabla g|^2\leq n+1$ gives 
\begin{equation*}
    \label{eq: integral representation 3}
    \int_{M\setminus B_1(0)} \frac{1}{|x|^\beta}dx \leq \sqrt{n+2}\int_{\mathbb{S}^{n-1}}d\sigma   \int_{r_0(\sigma)}^\infty  \left( |r|^2+ |g(r\sigma)|^2\right)^{-\frac{\beta}{2}}r^{n-1}dr.
\end{equation*}
Now, using the change of variable ~ $r = s +r_0(\sigma)$ and the estimate \eqref{r0 sigma less one},  we obtain
\begin{equation}
    \label{eq: final inequality to prove}
    \int_{M\setminus B_1(0)} \frac{dx}{|x|^\beta} \leq \sqrt{n+2}\int_{\mathbb{S}^{n-1}}d\sigma   \int_{0}^\infty  \psi(s,\sigma)^{-\frac{\beta}{2}} (1+s)^{n-1}ds,
\end{equation}
where 
$$
\psi(s,\sigma) := \left( |r_0(\sigma) +s|^2+ |g((s+r_0(\sigma))\sigma)|^2\right).
$$

Now our aim is to estimate the function $\psi$ from above. An application of the mean value theorem gives that for some ~$\rho \in (r_0(\sigma), s+r_0(\sigma))$,
$$
g((s+r_0(\sigma))\sigma) = s \left\{\nabla g(\rho\sigma)\cdot \sigma \right\}+g(r_0(\sigma)\sigma).
$$

We have omitted the dependence of $\rho$ on $s$ and $r_0(\sigma)$ as we will obtain a bound on $\psi(s,\sigma)$ which is independent of $\rho $ and $\sigma$, as we will see shortly.

Let ~$\epsilon$ belong to $(0,1)$ which will be chosen appropriately later. Taking the square of the last identity for $g(s+r_0(\sigma))$, and applying the inequality ~$2|ab| \leq  \epsilon a^2 + \frac{1}{\epsilon}b^2$  with ~$a=s \left\{\nabla g(\rho\sigma)\cdot \sigma \right\} $ and ~$b=g(r_0(\sigma)\sigma) $, we obtain
$$
  g^2((s+r_0(\sigma))\sigma) \geq (1-\epsilon)g^2(r_0(\sigma)\sigma) +\left(1-\frac{1}{\epsilon}\right)|\nabla g(\rho\sigma)\cdot \sigma|^2s^2.
  $$
Using Cauchy-Schwarz inequality, ~$|\nabla g |^2 \leq n+1 $, and ~$1-\frac{1}{\epsilon} <0$, we have
\begin{equation}
  \label{eq: final  estimate on g}
     g^2((s+r_0(\sigma))\sigma) \geq (1-\epsilon)g^2(r_0(\sigma)\sigma) + \left(1-\frac{1}{\epsilon}\right)(1+n)s^2.
\end{equation}
Therefore, using ~\eqref{eq: rsigma relation} in the last step of the following estimate, together with ~\eqref{eq: final  estimate on g}, we get
\begin{equation}
\label{eq: total estimate}
 \begin{split}
   \psi(s,\sigma)&=|r_0(\sigma) +s|^2+ |g((s+r_0(\sigma))\sigma)|^2
   \\ 
   &\geq r_0^2(\sigma) +s^2+  (1-     
   \epsilon)g^2(r_0(\sigma)\sigma) +\left(1-\frac{1}{\epsilon}\right)(1+n)s^2 
    \\
     &\geq  (1-\epsilon ) \left\{r_0^2(\sigma) +   g^2(r_0(\sigma)\sigma)\right\} + \left\{1 + 
    \left(1-\frac{1}{\epsilon}\right)(1+n)\right\}s^2
    \\
    &\geq  (1-\epsilon) + \left\{1 + 
    \left(1-\frac{1}{\epsilon}\right)(1+n)\right\}s^2.
   \end{split}
\end{equation}
Choose now an ~$\epsilon = \epsilon_0 \in \left( \frac{n+1}{n+2}, 1\right)$ depending only on $n$. Then clearly both ~$A(n) := 1-\epsilon_0$ and ~$B(n):= 1 + 
 \left(1-\frac{1}{\epsilon_0}\right)(1+n)$ are strictly positive numbers. Using \eqref{eq: total estimate} we have 
$$
  \psi(s,\sigma)^{-\frac{\beta}{2}} \leq \left(A(n) + B(n)s^2 \right)^{-\frac{\beta}{2}} \hspace{3mm} \forall \sigma \in \mathbb{S}^{n-1}.
$$
Therefore as ~$\beta > n ,$
\begin{multline*}
\int_0^\infty \psi(s,\sigma)^{-\frac{\beta}{2}} (1+s)^{n-1} ds \leq \int_0^\infty \left(A(n) + B(n)s^2 \right)^{-\frac{\beta}{2}}(1+s)^{n-1}ds =:C_0(\beta,n) < \infty.
\end{multline*}
Combining this inequality with \eqref{eq: final inequality to prove}
finishes the proof.
\smallskip

\textit{Proof of ~\eqref{eq:bound in ball}.} 
We recall first a classical result concerning the monotonicity of the perimeter of convex sets with respect to inclusion: if ~$A$ and ~$B$ are convex subsets of ~$\R^{n+1}$ such that ~$A\subset B$, then ~$|\partial A|\leq|\partial B|$ (A possible proof is by using Cauchy's surface area formula, see for instance ~\cite[Proposition A.2 in the Appendix]{Cabre-Cozzi-Csato}).

We split the proof into two different cases: $\beta\in[0,n)$ and $\beta<0.$ Let us first prove the case $\beta\in [0,n).$ 
It follows from Theorem ~\ref{th:intro:key theorem}, the inclusions ~$\partial\Omega\cap B_1(0)\subset \partial(\Omega\cap B_1(0))$ and $\Omega\cap B_1(0)\subset B_1(0)$ that
\begin{align*}
  \int_{\delomega\cap B_1(0)}|x|^{-\beta}dx
  \leq &
  \int_{\partial(\Omega \cap B_1(0))}|x|^{-\beta}dx
  \leq  C(n,\beta) |\partial(\Omega \cap B_1(0))|^{\frac{n-\beta}{n}}
  \\
  \leq &
  C(n,\beta)|\partial B_1(0)|^{\frac{n-\beta}{n}}.
\end{align*} 

We now prove the case  ~$\beta < 0$. Using the inequality ~$|x|\leq 1$, one has , after using the monotonicity of the perimeters  for convex sets as in the previous case,
$$\int_{\delomega\cap B_1(0)}|x|^{-\beta}dx
  \leq |\partial B_1(0)|.$$
\end{proof}

In the proof of Theorem \ref{th: intro: Csato-Prosenjit} we will use twice H\"older inequality in the following form, which we present here to avoid repetition. Let $~\tau, ~\gamma, ~\alpha, ~\beta \in \mathbb{R}, ~\alpha  > \tau>0,$ and suppose $E \subset \R^{n+1}$ is measurable. Then 
\begin{equation}
    \label{lemma: sec 4: Holder type}
    \int_{E}|x|^{\tau\gamma}|u(x)|^\tau dx \leq \left( \int_E |x|^{\beta\alpha}|u(x)|^\alpha dx\right)^{\frac{\tau}{\alpha}} \left( \int_E|x|^{\frac{\tau\alpha(\gamma - \beta)}{\alpha -\tau}}dx\right)^{\frac{\alpha -\tau}{\alpha}}
\end{equation}
for all measurable functions $u:E\to\R$.  
The proof of the above lemma is a direct application of H\"older inequality with the exponents ~$\alpha/\tau$ and ~$\alpha/(\alpha-\tau)$.

\begin{proof}[Proof of Theorem \ref{th: intro: Csato-Prosenjit} for ~$1/\tau>a/p+(1-a)/q$]
Recall that $\gamma<-as$ by \eqref{eq:sec4: gamma negative}. Thus the assumption \eqref{eq:gy:curious condition} yields that
$$
    a\neq 1\quad\text{ and }\quad q\neq p_s^*. 
$$ 
By assumption the parameters satisfy the identity
\begin{equation}
    \label{th: end part of the main theorem}
     \frac{1}{\tau} + \frac{\gamma}{n} = \frac{a}{p_s^{*}} + \frac{1-a}{q} =: A(a).
\end{equation}
Observe that
\begin{equation}
  \label{eq:A decreasing}
   \begin{split}
              A\text{ is increasing in $a$ }\quad &\text{ if $q>p_s*$},
             \\ 
             A\text{ is decreasing in $a$ }\quad &\text{ if $q<p_s*$}.
    \end{split}
\end{equation} 
 
Since the inequality of Theorem \ref{th: intro: Csato-Prosenjit} that we wish to prove is scaling invariant under the substitution $\Omega\mapsto\lambda \Omega$ and $u(x)\mapsto cu(x/\lambda)$ we can assume without loss of generality that 
$$\int_{\delomega}\int_{\delomega} \frac{|u(x)-u(y)|^p}{|x-y|^{n+ s p}} \, dx dy +\int_{\delomega}H_{\alpha}(x)^{\frac{s p}{\alpha}} |u(x)|^p \, dx =1$$
and 
$$\|u\|_{L^{q}(\delomega)}=1,$$
since these two norms scale differently.  In view of this  it is sufficient to prove that 
$$ \||x|^\gamma u\|_{L^{\tau}(\delomega)}\leq C$$
for some constant depending only on the parameters and the dimension, but not on $u$ nor $\Omega$.
 Since ~$0<a<1$ we can choose ~$a_1$ and  ~$a_2$  such that
\begin{displaymath}
   \begin{split}
      0<a_1<a<a_2<1\quad \text{ if $q >p_s^*$},       
      \\
      0<a_2<a<a_1<1 \quad \text{ if $q < p_s^*$}.
\end{split}
\end{displaymath}
The values of $a_1$ and $a_2$ will depend on a further condition, which in turn will depend only on $a,$ $n,$ $p,$ $q,$ $s,$ $\gamma,$ and $\tau$.
In view of \eqref{eq:A decreasing} this choice yields that
$$
  A(a_1)< A(a)<A(a_2).
$$
We now choose $\tau_1>0$ and $\gamma_1 $ such that 
$$
  \frac{1}{\tau_1}=\frac{a_1}{p_s^*}+\frac{1-a_1}{q} \quad\textrm{and}\quad   \gamma_1 = 0.
$$
Also choose $\tau_2>0$ and $\gamma_2 < 0$ such that 
$$
  \frac{1}{\tau_2}=\frac{a_2}{p}+\frac{1-a_2}{q} \quad \textrm{and} \quad \gamma_2 = -a_2s.
$$
With this choice one has that the dimensional balance condition \eqref{eq:intro: relation between the parameters s,p, tau, gamma}
$$ \frac{1}{\tau_i} + \frac{\gamma_i}{n} = A(a_i), \ i=1,2 $$
is satisfied for the new set of parameters. Using the identity $1/p_s^*+s/n=1/p$ and the dimensional balance condition leads to the two identities 
\begin{equation}
 \label{eq:taui bigger tau}
   \begin{split}
     \frac{1}{\tau}-\frac{1}{\tau_i}= &
     (a-a_i)\left(\frac{1}{p_s^*}-\frac{1}{q}\right)+\frac{\gamma_i-\gamma}{n}
     \\
     =& (a-a_i)\left(\frac{1}{p}-\frac{1}{q}\right)+\frac{\gamma_i-\gamma-(a-a_i)s}{n},
   \end{split}
\end{equation}
for $i=1,2$. Recall again that $\gamma<-as<0$ by  \eqref{eq:sec4: gamma negative}. 
Thus by the definitions of $\gamma_i$, using the first identity of \eqref{eq:taui bigger tau} for $i=1$, and the second identity for $i=2$ respectively, gives that for $a_1$ and $a_2$ close enough to $a,$ 
\begin{equation}
  \label{eq:taui bigger tau final}
    \tau_i >\tau\quad\text{ for }i=1,2. 
\end{equation}

Since  ~$\frac{1}{\tau_i}\leq \frac{a_i}{p}+\frac{1-a_i}{q}$ for ~$i=1,2$,  we can use  Theorem \ref{th: intro: Csato-Prosenjit}, which we have proven in Section \ref{section:proof of tau bigger p} under these assumptions, to obtain 
\begin{equation*}
     \label{eq: sec4: Our Main result used}
     \||x|^{\gamma_i} u\|_{L^{\tau_i}(\delomega)}\leq C\left(\frac{1}{2}\int_{\delomega}\int_{\delomega} \frac{|u(x)-u(y)|^p}{|x-y|^{n+ s p}} \, dx dy +\int_{\delomega}H_{\alpha}(x)^{\frac{s p}{\alpha}} |u(x)|^p \, dx  \right)^{\frac{a_i}{p}} 
      \|u\|_{L^{q}(\delomega)}^{1-a_i}.
 \end{equation*}
 In view of the assumptions made above on $u$ and $\Omega$, this yields that 
 \begin{equation}
     \label{eq: sec4: after using the known case}
      \||x|^{\gamma_i} u\|_{L^{\tau_i}(\delomega)}\leq C, \quad\text{ for }  i=1,2.
 \end{equation}
 Applying  H\"older inequality \eqref{lemma: sec 4: Holder type} with ~$E={ B_1(0) \cap \partial\Omega}, ~\tau, ~\gamma,
 ~\alpha = \tau_1$ and ~$\beta = \gamma_1$, \eqref{eq:taui bigger tau final}, together with \eqref{eq: sec4: after using the known case},
 we get 
 \begin{equation}
     \label{eq: proof of the theorem last part}
     \begin{split} 
     \int_{ B_1(0) \cap \partial\Omega}|x|^{\tau\gamma}|u(x)|^\tau dx  
     \leq &     
     \left( \int_{ B_1(0) \cap \partial\Omega} |x|^{\gamma_1\tau_1}|u(x)|^{\tau_1} dx\right)^{\frac{\tau}{\tau_1}} \left( \int_{ B_1(0) \cap \partial\Omega}|x|^{\frac{\tau\tau_1(\gamma - \gamma_1)}{\tau_1 -\tau}}dx\right)^{\frac{\tau_1 -\tau}{\tau_1}}
      \\
     \leq & C\left( \int_{ B_1(0) \cap \partial\Omega}|x|^{\frac{\tau\tau_1(\gamma - \gamma_1)}{\tau_1 -\tau}}dx\right)^{\frac{\tau_1 -\tau}{\tau_1}}.
     \end{split}
 \end{equation}
 In view of the dimensional balance condition, the inequality ~$A(a_1)<A(a)$~ is equivalent to  
 $$
 \frac{\tau\tau_1(\gamma - \gamma_1)}{\tau_1 -\tau} >-n.
 $$
Hence we can appeal to  ~\eqref{eq:bound in ball} of Lemma \ref{lemma:weighted integrals on M} to get a constant ~$C >0$ that depends only on $a,n,p,q,s,\tau,$ and $ \gamma, $ such that 
 $$\int_{ B_1(0) \cap \partial\Omega}|x|^{\frac{\tau\tau_1(\gamma - \gamma_1)}{\tau_1 -\tau}}dx \leq C.$$
Thus we obtain from \eqref{eq: proof of the theorem last part} that
\begin{equation}
    \label{eq: proof of the theorem last part1} 
     \int_{ B_1(0) \cap \partial\Omega}|x|^{\tau\gamma}|u(x)|^\tau dx \leq C.
\end{equation}

Applying a second time H\"older inequality \eqref{lemma: sec 4: Holder type} with ~$E={ B_1(0)^c \cap \partial\Omega}, ~\tau, ~\gamma,
 ~\alpha = \tau_2$ and ~$\beta = \gamma_2$, \eqref{eq:taui bigger tau final}, together with \eqref{eq: sec4: after using the known case},
 we get 
 \begin{align*}
     \label{eq: proof of the theorem last part} 
     \int_{\partial\Omega\setminus B_1(0)}|x|^{\tau\gamma}|u(x)|^\tau dx
     \leq  &
     \left( \int_{\partial\Omega\setminus B_1(0)} |x|^{\gamma_2\tau_2}|u(x)|^{\tau_2}dx\right)^{\frac{\tau}{\tau_2}} 
     \left( \int_{\partial\Omega\setminus B_1(0)}|x|^{\frac{\tau\tau_2(\gamma - \gamma_2)}{\tau_2 -\tau}}dx\right)^{\frac{\tau_2 -\tau}{\tau_2}}
      \\
     \leq  &C\left( \int_{\partial\Omega\setminus B_1(0)}|x|^{\frac{\tau\tau_2(\gamma - \gamma_2)}{\tau_2 -\tau}}dx\right)^{\frac{\tau_2 -\tau}{\tau_2}}.
 \end{align*}

 Like before ~$A(a) < A(a_2)$ is equivalent to 
 $$
   \frac{\tau\tau_2(\gamma - \gamma_2)}{\tau_2 -\tau} < -n,
 $$
 and hence we can appeal to  \eqref{eq:ball complement} of Lemma \ref{lemma:weighted integrals on M} to get a constant ~$C >0$ that depends only on ~$a,n,p,q,s,\tau,$ and $\gamma,$ such that 
 $$\int_{\partial\Omega\setminus B_1(0)}|x|^{\frac{\tau\tau_1(\gamma - \gamma_1)}{\tau_1 -\tau}}dx \leq C$$
This shows that
\begin{equation}
    \label{eq: proof of the theorem last part2} 
     \int_{\partial\Omega\setminus B_1(0)}|x|^{\tau\gamma}|u(x)|^\tau dx \leq C.
\end{equation}
Finally combining ~\eqref{eq: proof of the theorem last part1} and  ~\eqref{eq: proof of the theorem last part2} finishes the proof of the theorem.
\end{proof}

\bigskip

\noindent
\textbf{Conflict of interest} On behalf of all authors, the corresponding author states that there is no conflict of interest.

\end{document}